\numberwithin{equation}{section}
\newtheorem{theorem}{Theorem}[section]
\newtheorem{lemma}[theorem]{Lemma}
\theoremstyle{definition}
\theoremstyle{remark}
\newtheorem{remark}[theorem]{Remark}
\newcommand{\tnorm}{\@ifstar\@tnorms\@tnorm}
\newcommand{\@tnorms}[1]{%
	\left|\mkern-1.5mu\left|\mkern-1.5mu\left|
	#1
	\right|\mkern-1.5mu\right|\mkern-1.5mu\right|
}
\newcommand{\@tnorm}[2][]{%
	\mathopen{#1|\mkern-1.5mu#1|\mkern-1.5mu#1|}
	#2
	\mathclose{#1|\mkern-1.5mu#1|\mkern-1.5mu#1|}
}
\def\ps@pprintTitle{%
  \let\@oddhead\@empty
  \let\@evenhead\@empty
  \let\@oddfoot\@empty
  \let\@evenfoot\@oddfoot
}
\newcommand{\vertiii}[2][1]{\abs[#1]{\kern0.15ex\norm[#1]{#2}\kern-0.25ex}}
\title[Symplectic second-order method for two-phase flow in porous media]{Discrete energy balance equation via a symplectic second-order method for two-phase flow in porous media}
\author[G. Sosa Jones]{Giselle Sosa Jones}
\address{Department of Mathematics and Statistics, Oakland University, 146 Library Drive, Rochester, MI 48309, USA.}
\email{gsosajones@oakland.edu}
\author[C. Trenchea]{Catalin Trenchea}
\address{Department of Mathematics, University of Pittsburgh, 301 Thackeray Hall, Pittsburgh, PA 15260, USA.}
\email{trenchea@pitt.edu}
\begin{document}

\begin{abstract}
We propose and analyze \normalcolor
a second-order partitioned time-stepping method for a 
two-phase flow problem in porous media. 
The algorithm is based on a refactorization of Cauchy's one-leg $\theta$-method. The main part consists of the implicit backward Euler method on $[t^n, t^{n+\theta}]$,
while part two uses a linear extrapolation on $[t^{n+\theta},t^{n+1}]$ to obtain the solution at $t^{n+1}$, equivalent to the forward Euler method. 

In the backward Euler step, the decoupled equations are solved iteratively. We prove that the iterations converge linearly to the solution of the coupled problem, under some conditions on the data. When $\theta = 1/2$, the algorithm is equivalent to the symplectic midpoint method. In the absence of the chain rule for time-discrete setting, we approximate the change in the free energy by the product of a second-order accurate discrete gradient (chemical potential) and the one-step increment of the state variables.
Similar to the continuous case, we also prove a discrete Gibbs free energy balance equation, without numerical dissipation. In the numerical tests we compare this implicit midpoint method with the 
classic backward Euler method, and two implicit-explicit 
time-lagging schemes. The midpoint method outperforms the other schemes in terms of rates of convergence, long-time behavior and energy approximation, for small and large values of the time step.
\end{abstract}

\maketitle





	
		
	
	
	\section{Introduction}
	\label{sec:introduction}
	Modeling 
 two-phase flow in porous media is of  importance in various fields such as petroleum engineering, hydrogeology, and environmental science. 
 Understanding the behavior of fluids, 
 for instance water and oil, within porous media is crucial for 
 accurately 
 \normalcolor
 predicting and optimizing reservoir performance, groundwater contamination, and pollutant transport.
	
	The flow of two immiscible components in a porous material is described by mass conservation of each component, together with Darcy's law for the velocity of each fluid. 
 These are complemented by two algebraic relations: a total mass conservation (`no-voids') constraint, and a relation between the difference in the phase pressures and the capillary pressure.
%
The mathematical model is a time-dependent, coupled system of partial differential equations (PDEs), 
or more precisely a differential algebraic equation (DAE) 
\cite{MR1009546,MR821044,MR1005520,MR749366,MR1138475,MR1893418,MR4586824,MR4328554},
with strong nonlinearities.
\\
At the continuous level, the DAE 
describing the two-phase flow in porous media satisfies a structural or geometric property, i.e., an energy balance equation holds, which can also be used to define a continuous-time Dirac structure \cite{MR4586824,10.5555/553988}.
 This means that the increment in the Hamiltonian (storage function)  is equal to the sum of the energy dissipation rate and the energy exchange (supply function) \cite{MR4586824,MR4001128}.

 From the numerical 
 analysis 
 viewpoint, 
 significant effort has been 
 devoted to designing and developing spatial discretizations for the solution of these problems; see, e.g., \cite{DongRiviere2016,RankinRiviere2015,Cappanera:2019,chen2001error} for finite element methods, \cite{douglas1983finite} for finite difference methods, and \cite{Eymard2003,ohlberger1997convergence,Michel03} for finite volume methods. 
 However, the literature on accurate and stable time-stepping methods is 
 limited. 
 Given that underground flow occurs over long periods of time, for accurate predictions it is paramount to employ time-stepping algorithms that remain stable for long simulation times. 
 In this sense, the development of 
 symplectic time-marching algorithms is important, since such methods preserve physical properties, guarantee stability and accuracy, and enable reliable long-term simulations \cite{MR2132573,MR2840298}. 
 The symplectic 
 integrators \normalcolor 
 which are derived from discrete versions of Hamilton's principle are called variational integrators and 
 are effective for conservative and dissipative or forced mechanical systems \cite{MR4001128,Lew2004AnOO}.
 \normalcolor
 
 In \cite{kou2022energy} an implicit-explicit (IMEX), first-order accurate in time, energy stable method for two-phase flow in porous media is introduced. 
 The method is based on an invariant energy quadratization approach, 
 using a regularized free energy, 
 and 
 relaxing the requirement on the maximum principle of saturations. The authors prove the energy stability of the scheme, 
 with nonzero numerical dissipation being introduced by the time-stepping method.
	
	In this paper, we present a symplectic second-order, energy-stable partitioned time-stepping method for the two-phase flow problem in a porous medium. 
 %
 Since in the discrete setting the chain rule is not valid, we approximate the change of in the free energy 
 by the product of a (second-order accurate) discrete gradient of the energy and the one-step increment of the state variables.
 Hence, at the discrete time level, 
 we 
 prove a discrete energy balance equation, which mimics the energy balance relation that the continuous problem satisfies.
 The preservation of the discrete-time Dirac structure 
 is based on 
 the symplectic time integration, 
 yielding accurate long time simulations.
 The algorithm is based on the refactorized implementation of midpoint method: 
 to obtain the solution at time $t^{n+1}$, first an implicit solver is used to obtain the solution at time $t^{n+\theta}$, with $\theta \in [0,1]$, and then an explicit method is used to calculate the solution at time $t^{n+1}$ \cite{MR4092601,MR4265875,MR4342399,MR4505477,MR4398357}. 
 Due to the nonlinearity of the problem, the implicit step requires a subiteration scheme, which we show that it converges linearly, under certain regularity conditions on the problem parameters and the smoothness of the exact solution. 
 %
    Traditionally, the numerical methods for multiphase flow in porous media use either a time lagging IMEX scheme or a backward Euler method for the time discretization, see e.g., \cite{CappaneraRiviere,EpshteynRiviere2009,MR4446767}. 
In the numerical tests 
we compare the performance of the 
proposed method versus the backward Euler method, a first-order, and a second-order time lagging scheme.
	
The rest of the paper is organized as follows. 
In \Cref{sec:prob_description} we briefly present
the physical problem, and in \Cref{sec:time_stepping_method} we introduce the partitioned time-stepping method.
%
In \Cref{sec:convergence_proof} 
we 
demonstrate
the linear convergence of the iterations from the implicit partitioned part of the algorithm, and energy stability properties of the method. 
We 
prove a continuous and a discrete-time energy balance equation, which imply dissipation of the free energy and stability of the numerical method.  Finally, numerical results are given in \Cref{sec:numerical_results}.
	
	\section{Problem Description}
	\label{sec:prob_description}
	Consider a reservoir $\Omega \subset \mathbb{R}^d$, $d=2,3$, in which there are two immiscible components: water and oil, 
 that are in aqueous ($a$) and liquid ($\ell$) phases, respectively. Disregarding the gravity effects, and using Darcy's law for the velocity of each phase, the conservation of mass for each component states that:
	\begin{subequations}
		\begin{align}
			\phi \partial_t s_j + \nabla \cdot \boldsymbol{u}_j &= q_j
			\\
			\boldsymbol{u}_j & = -\kappa \lambda_j \nabla p_j,
		\end{align}
		\label{eq:mass_cons}
	\end{subequations}
	for $j = a,\ell$. Here, $\phi$ and $\kappa$  denote the porosity and the absolute permeability of the medium, respectively. 
 Furthermore, $s_j$, $\lambda_j$ and $p_j$ are the saturation, mobility and pressure of phase $j$, respectively. 
 Finally, $q_j$ represent the source/sink terms. 
 We 
 assume that, while $\phi$ is a constant, $\kappa$ is a piecewise constant function in space, and the source terms are given functions of time and space. 
 \\
 Given that the reservoir is fully saturated with the two components, we have the algebraic relation
	\begin{equation}
		s_a + s_\ell = 1.
		\label{eq:saturations_relation}
	\end{equation}
	Moreover, see e.g. \cite{brooks-corey-1964_Hppm}, the difference in phase pressures is given by the capillary pressure $p_{c}$, i.e.,
	\begin{equation}
		p_\ell - p_a = p_{c}.
		\label{eq:algebraic_relations}
	\end{equation}
 The equations \eqref{eq:mass_cons}-\eqref{eq:algebraic_relations} represent a differential-algebraic system of equations (DAE) \cite{MR2657217,MR1638643}. 
	We consider a formulation where the primary unknowns are the pressure of the liquid phase, $p_\ell$, and the aqueous saturation $s_a$. 
 The phase mobilities and the capillary pressure depend on the primary unknowns:
	\begin{equation*}
		\lambda_j \equiv \lambda_j(s_a), \quad p_{c} \equiv p_{c}(s_a).
	\end{equation*}
	Since $p_{c}$ is a decreasing function of $s_a$, we have $p_{c}' < 0$ \cite{ChenBook,BearBook}. The PDE for $p_\ell$ is obtained by adding up the mass conservation equations 
 \eqref{eq:mass_cons}, whereas the equation for $s_a$ is given by \cref{eq:mass_cons}. After using the algebraic relations in \eqref{eq:saturations_relation} and \eqref{eq:algebraic_relations}, and substituting the velocities $\boldsymbol{u}_j$, we obtain the following system of PDEs:
	\begin{subequations}
		\begin{align}
			-\nabla \cdot \left(\lambda \kappa \nabla p_\ell\right) + \nabla \cdot \left(\lambda_a \kappa \nabla p_{c}\right) &= q,
			\label{eq:pl_eq}
			\\
			\phi \partial_t s_a + \nabla \cdot \left(\lambda_a \kappa p_{c}' \nabla s_a\right) - \nabla \cdot \left(\lambda_a \kappa \nabla p_\ell\right) &= q_a,
			\label{eq:sa_eq}
		\end{align}
		\label{eq:PDE_system}
	\end{subequations}
 where $\lambda = \lambda_\ell + \lambda_a$, and $q = q_\ell + q_a$. 
The system is 
endowed with the initial conditions $p_\ell|_{t = 0} = p_\ell^0$, $s_a|_{t = 0} = s_a^0$. Moreover, for the analysis in this paper we consider non-homogeneous Dirichlet boundary conditions on $\partial \Omega$. 

The Gibbs free energy $F$ in a system with two immiscible components can be written as a function of $s_a$ and $s_\ell$ as follows
	\begin{equation}
		F(s_a, s_\ell) = \sum_{j = a, \ell}\gamma_js_j\del{\ln(s_j) - 1} + \gamma_{a\ell}s_a s_\ell,
\label{eq:free_energy}
	\end{equation}
	where $\gamma_a, \gamma_\ell, \gamma_{a\ell}$ are the (constant) energy parameters \cite{atkins2014atkins,kou2022energy,MR4446767}. 
	The chemical potentials $\mathcal{P}_j$, for $j=\ell,a$ are defined as $\mathcal{P}_j = \partial_{s_j}F$. The difference in pressures is equal to the difference in potentials \cite{GaoPorousMedia,ChenBook,atkins2014atkins}, i.e.,
\begin{align}
\label{eq:chemicalpotential-capillarypressure}
\nu_{a} \equiv \mathcal{P}_a - \mathcal{P}_\ell = p_a - p_\ell =-p_{c}.
\end{align}
 This gives us a relation between the chemical potentials and the phase pressures. Noting that $s_\ell = 1-s_a$ and using the chain rule, the relation between the free energy and the capillary pressure is given by
	\begin{align}
		\partial_t F(s_a, s_\ell) &= \partial_{s_a} F(s_a, s_\ell) \partial_t s_a + \partial_{s_\ell}F(s_a, s_\ell) \partial_t s_\ell 
  = \mathcal{P}_a \partial_t s_a  - \mathcal{P}_\ell \partial_t s_a
		\nonumber
  = \nu_{a} \partial_t s_a
		\nonumber
		\\
		&= -p_{c}\partial_t s_a.
		\label{eq:chain_rule}
	\end{align}
Using the definition of the Gibbs free energy \eqref{eq:free_energy}, we have an explicit expression for $\nu_a$ given by
	\begin{align}
		\nu_a &= \mathcal{P}_a - \mathcal{P}_\ell
  = \partial_{s_a}F(s_a,s_\ell) - \partial_{s_\ell}F(s_a,s_\ell)
  = \gamma_a \ln(s_a) - \gamma_\ell \ln(1-s_a) + \gamma_{a\ell}(1-2s_a).
  \label{eq:nu_a_definition_continuous}
	\end{align}
 The equations \eqref{eq:chemicalpotential-capillarypressure}, \eqref{eq:chain_rule}, and \eqref{eq:nu_a_definition_continuous} will be used in \Cref{sec:convergence_proof} to prove an energy balance equation in the continuous and discrete time cases.
	\section{Time-stepping method}
	\label{sec:time_stepping_method}
	
	The time interval $[0,T]$ of interest is divided into time levels $t^0 = 0 < t^1 < \cdots < t^N = T$, where $t^{n+1} = t^n + \tau^n$. Let $t^{n+\theta^n} = t^n + \theta^n\tau^n$, for any $\theta^n \in [
 0,1]$, $\forall n \geq 0$. For any time-dependent 
 variable  $g(t)$, $g^n$ denotes an approximation to $g(t^n)$. 
 The time-stepping method is based on the refactorization of the (implicit) midpoint method, where we first use the implicit (backward) Euler method to approximate the solution at $t^{n+\theta^n}$, and then use the explicit (forward) Euler method to obtain the solution at $t^{n+1}$, see e.g., \cite{MR4092601}. Since the problem is nonlinear, it requires a subiterative procedure to solve the system at $t^{n+\theta^n}$. For applications involving computations on large physical domains, we partition the computations in a sequential manner, i.e., we first solve for the pressure of the liquid phase $p_\ell$, then use this approximation in the equation for aqueous saturation $s_a$, and iterate until convergence.

%
%
	Let $s^{n+\theta^n}_{a_{(i)}}$ and $p^{n+\theta^n}_{\ell_{(i)}}$ denote $s_a^{n+\theta^n}$ and $p_\ell^{n+\theta^n}$, respectively, at 
 the index $i$ of the subiteration. 
 For the sake of simplifying the presentation, we denote $g^{n+\theta^n}_{(i)} = g(s^{n+\theta^n}_{a_{(i)}})$, where $g$ is any coefficient that depends on $s_a$. First, we need to calculate $p_\ell^1$ and $s_a^1$ using a second-order method, for which a monolithic method could be used. 
 
	\noindent \textbf{Step 1}: First, we 
 define the initial guess $p_{\ell_{(0)}}^{n+\theta^n}$, $s_{a_{(0)}}^{n+\theta^n}$ 
 as the  extrapolated values:
	\begin{subequations}
		\begin{align*}
			p_{\ell_{(0)}}^{n+\theta^n} &= \frac{1+\theta^n\tau^n + \tau^{n-1} - \theta^{n-2}\tau^{n-2}}{1 + \theta^{n-1}\tau^{n-1} - \theta^{n-2}\tau^{n-2}}p_\ell^{n+\theta^{n-1}-1} - \frac{\del[1]{1-\theta^{n-1}}\tau^{n-1} + \theta^n\tau^n}{1+\theta^{n-1}\tau^{n-1} - \theta^{n-2}\tau^{n-2}}p_\ell^{n+\theta^{n-2} - 2},
			\\
			s_{a_{(0)}}^{n+\theta^n} &= \del{1 + \frac{\theta^n\tau^n}{\tau^{n-1}}}s_{a}^n - \frac{\theta^n\tau^n}{\tau^{n-1}}s_a^{n-1},
		\end{align*}
		\label{eq:initial_guess}
	\end{subequations}
	
	
	\noindent \textbf{Step 2}: For every $i \geq 0$ until convergence, we solve the following equations sequentially, which correspond to a backward Euler method to obtain $p_{\ell_{(i+1)}}^{n+\theta^n}$ and $s_{a_{(i+1)}}^{n+\theta^n}$:
	\begin{subequations}
 \label{eq:BE-iteration}
		\begin{align}
			-\nabla \cdot \left(\lambda_{{(i)}}^{n+\theta^n} \kappa \nabla p_{\ell_{(i+1)}}^{n+\theta^n}\right) &= q^{n+\theta^n} - \nabla \cdot \left(\lambda_{a_{(i)}}^{n+\theta^n} \kappa \nabla p_{c_{(i)}}^{n+\theta^n}\right)
\\
			\phi \frac{s_{a_{(i+1)}}^{n+\theta^n} - s_a^n}{\theta^n\tau^n} + \nabla \cdot \left(\lambda_{a_{(i)}}^{n+\theta^n} \kappa p'^{n+\theta^n}_{c_{(i)}} \nabla s_{a_{(i+1)}}^{n+\theta^n}\right)  &= q_a^{n+\theta^n} + \nabla \cdot \left(\lambda_{a_{(i)}}^{n+\theta^n} \kappa \nabla p_{\ell_{(i+1)}}^{n+\theta^n}\right)
		\end{align}
		\label{eq:system_iterations}
	\end{subequations}
	%
	When convergence of the sequences $\{ p_{\ell_{(i+1)}}^{n+\theta^n}\}_{i} $, $\{s_{a_{(i+1)}}^{n+\theta^n} \}_i$ is attained, the 
 limit values $p_\ell^{n+\theta^n}$, $s_a^{n+\theta^n}$ satisfy the following coupled system:
	\begin{subequations}
		\begin{align}
			-\nabla \cdot \left(\lambda^{n+\theta^n} \kappa  \nabla p_\ell^{n+\theta^n}\right)+ \nabla \cdot \left(\lambda_a^{n+\theta^n} \kappa \nabla p_{c}^{n+\theta^n}\right) &= q^{n+\theta^n} ,
			\label{eq:BE_pl}
			\\
			\phi \frac{s_a^{n+\theta^n} - s_a^n}{\theta^n \tau^n} + \nabla \cdot \left(\lambda_a^{n+\theta^n} \kappa p_c'^{n+\theta^n} \nabla s_a^{n+\theta^n}\right) - \nabla \cdot \left(\lambda_a^{n+\theta^n} \kappa \nabla p_\ell^{n+\theta^n}\right) &= q_a^{n+\theta^n}.
			\label{eq:BE_sa}
		\end{align}	
		\label{eq:system_midstep}
	\end{subequations}

	\noindent \textbf{Step 3}: Obtain the solution at $t^{n+1}$ by evaluating the following extrapolation:
	\begin{equation}
			s_a^{n+1} = \frac{1}{\theta^n}s_a^{n+\theta^n} - \frac{1-\theta^n}{\theta^n}s_a^n,
		\label{eq:forward_step}
	\end{equation}
	which is equivalent to solving the 
 forward Euler problem:
		\begin{equation}
			\phi \frac{s_a^{n+1} - s_a^{n+\theta^n}}{(1-\theta^n) \tau^n} = q_a^{n+\theta^n} -  \nabla \cdot \left(\lambda_a^{n+\theta^n} \kappa p_c'^{n+\theta^n} \nabla s_a^{n+\theta^n}\right) + \nabla \cdot \left(\lambda_a^{n+\theta^n} \kappa \nabla p_\ell^{n+\theta^n}\right).
			\label{eq:FE_sa}
		\end{equation}	
		\label{eq:system_finalstep}
We note that the solution to \eqref{eq:BE_pl}--\eqref{eq:FE_sa} solves
 \begin{align}
 \label{eq:midpoint}
 \begin{array}{rr}
\displaystyle
-\nabla \cdot \left(\lambda^{n+\theta^n} \kappa  \nabla p_\ell^{n+\theta^n}\right)+ \nabla \cdot \left(\lambda_a^{n+\theta^n} \kappa \nabla p_{c}^{n+\theta^n}\right) 
= q^{n+\theta^n}
\vspace{0.1cm}\\
\displaystyle
\phi \frac{s_a^{n+1} - s_a^{n}}{\tau^n} 
+ \nabla \cdot \left(\lambda_a^{n+\theta^n} \kappa p_c'^{n+\theta^n} \nabla s_a^{n+\theta^n}\right)
-
\nabla \cdot \left(\lambda_a^{n+\theta^n} \kappa \nabla p_\ell^{n+\theta^n}\right)
= q_a^{n+\theta^n}  .
\end{array}
\end{align}	
	\section{Convergence and energy stability}
	\label{sec:convergence_proof}
	
	In this section, we show the convergence of the iterations in the backward Euler step \cref{eq:system_iterations}, and an energy balance relation, which yields the stability (energy dissipation inequality \cite{MR4586824}) 
 of the time discretization method proposed in this work.
	
	\subsection{Notation and hypotheses}
	
	In what follows, we denote by $(\cdot,\cdot)_D$ the standard $L^2$ inner product in a domain $D$. Moreover, $\norm[1]{\cdot}_D$ denotes the standard $L^2$ norm in $D$.  We make the following assumptions on the coefficients and the analytical solution of the problem.
	
\begin{enumerate}
	\item \label{A1}
	$0 < c_{\lambda_j} \leq \lambda_j \leq C_{\lambda_j}$, for $j=\ell, a$.
	\item \label{A2}
	$0 < c_{\lambda} \leq \lambda \leq C_{\lambda}$.
	\item \label{A3}
	$0 < \kappa_\ast \leq \kappa \leq \kappa^\ast$.
	\item \label{A4}
	$0 \leq c_{p_{c}} \leq -p_{c}' \leq C_{p_{c}}$.
	\item \label{A5}
	$0 \leq c_{\nabla p_{c}} \leq \norm[1]{\nabla p_{c}}_{L^\infty(\Omega)} \leq C_{\nabla p_{c}}$.
	\item \label{A6}
	$\abs{\lambda_j(s_{a_2}) - \lambda_j(s_{a_1})} \leq L_{\lambda_j}\abs{s_{a_2} - s_{a_1}}$, for $j=\ell, a$.
	\item \label{A7}
	$\abs{\lambda(s_{a_2}) - \lambda(s_{a_1})} \leq L_{\lambda} \abs{s_{a_2} - s_{a_1}}$.
	\item \label{A8}
	$\abs{p_{c}'(s_{a_2}) - p_{c}'(s_{a_1})} \leq L_{p_{c}}\abs{s_{a_2} - s_{a_1}}$.
	\item \label{A9}
	$\norm[1]{\nabla p_{c}(s_{a_2}) - \nabla p_{c}(s_{a_1})}_\Omega \leq L_{\nabla p_{c}}\norm[1]{s_{a_2} - s_{a_1}}_\Omega$.
	\item \label{A10}
	$p_\ell, \,s_a \in W^{1,\infty}(\Omega)$, so that $\norm[1]{\nabla p_\ell}_{L^\infty(\Omega)} = C^\ell_\infty < \infty$, $\norm[1]{\nabla s_a}_{L^\infty(\Omega)} = C^a_\infty < \infty$.
\end{enumerate}
	We remark that hypotheses 
\eqref{A5} and 
\eqref{A9} might appear to be restrictive, as the capillary pressure $p_c$ is a function of $s_a$.  However, such an assumption is widely used in the context of the analysis of multiphase flow in porous media, e.g., \cite{chen2001error,radu2018robust}. For instance, in \cite{chen2001error}, assumptions (A5) and (A7) state that the functions $\gamma_1$ and $\gamma_2$, which contain the gradient of the capillary pressure, are bounded and Lipschitz continuous with respect to the primary unknown $\theta$. 
	%
	
	\subsection{Convergence of the iterations}
	
	First we show that as the index increases $i 
 \nearrow \infty$, the sequences $\{p_{\ell_{(i+1)}}^{n+\theta^n}\}_{i\geq 0}$ and $\{s_{a_{(i+1)}}^{n+\theta^n}\}_{i\geq 0}$ converge strongly in $H^1(\Omega)$, respectively, to $p_{\ell}^{n+\theta^n}$ and $s_{a}^{n+\theta^n}$, 
 the solution of \eqref{eq:midpoint}. 
 To simplify the notation, we denote
	\begin{equation}
		K_\ell = \lambda \kappa, \quad K_a = -\lambda_a \kappa p_c', \quad B_a = \lambda_a \kappa.
		\label{eq:new_notaion}
	\end{equation}
%
We mention
 that assumptions 
 \eqref{A1}--\eqref{A4} imply that
\begin{enumerate}[resume]
		\item
  \label{A11}
  $0 < c_{K_j} \leq K_\ell^n \leq C_{K_j}$, for $j=\ell,a,$ and for all $n$.
		\item
  \label{A12}
  $0 < c_{B_a} \leq B_a^n \leq C_{B_a}$, for all $n$.
	\end{enumerate}
	Moreover, hypotheses 
 \eqref{A1}--\eqref{A8} imply that
	\begin{enumerate}[resume]
		\item
  \label{A13}
  $\abs{K_j(s_{a_2}) - K_j(s_{a_1})} \leq L_{K_j} \abs{s_{a_2} - s_{a_1}}$, for $j=\ell,a,v$.
		\item
  \label{A14}
  $\abs{B_a(s_{a_2}) - B_a(s_{a_1})} \leq L_{B_a} \abs{s_{a_2} - s_{a_1}}$.	
	\end{enumerate}
	
	Now, we are ready to prove the convergence of the iterations defined by \cref{eq:system_iterations}.
	\begin{lemma}
Assume that the time step $\tau^n$ is small enough so that it satisfies $\tilde{C} \theta^n \tau^n < 1$, where $\tilde{C}$ is a constant that depends on the hypotheses above. 
  Then the sequences $p_{\ell_{(i)}}^{n+\theta^n}$, $s_{a_{(i)}}^{n+\theta^n}$ calculated in \cref{eq:system_iterations} 
  converge  to $p_\ell^{n+\theta^n}$ and $s_a^{n+\theta^n}$, 
  the solution of \eqref{eq:midpoint}.
The convergence is linear, and is in the 
strong topology on $H^1(\Omega)$, namely
  \begin{equation*}
			\norm[1]{  s_a^{n+\theta^n} - s_{a_{(i+1)}}^{n+\theta^n} }_\Omega^2 + \theta^n \tau^n\norm[1]{\nabla \big( p_\ell^{n+\theta^n} - p_{\ell_{(i+1)}}^{n+\theta^n} \big) }_\Omega^2 
   + \theta^n \tau^n \norm[1]{\nabla  \big( s_a^{n+\theta^n} - s_{a_{(i+1)}}^{n+\theta^n} \big)}_\Omega^2
			\leq 
			\del[1]{\tilde{C} \theta^n \tau^n}^i \norm[1]{\delta_{(0)}^a}_\Omega^2.
		\end{equation*}
		%
		%
	\end{lemma}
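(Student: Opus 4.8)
The plan is to set up a one-step contraction in the iteration errors and then iterate it. Define $\delta^a_{(i)} := s_a^{n+\theta^n} - s_{a_{(i)}}^{n+\theta^n}$ and $\delta^p_{(i)} := p_\ell^{n+\theta^n} - p_{\ell_{(i)}}^{n+\theta^n}$, both of which vanish on $\partial\Omega$ because the exact mid-step solution and every iterate satisfy the same non-homogeneous Dirichlet data. Subtracting the iteration system \eqref{eq:system_iterations} from the limit system \eqref{eq:system_midstep} produces two \emph{linear} error equations whose right-hand sides collect the consistency terms created by freezing the nonlinear coefficients at index $(i)$. In the notation \eqref{eq:new_notaion}, the basic device is to add and subtract the frozen coefficient, e.g. $K_\ell\nabla p_\ell^{n+\theta^n} - K_{\ell,(i)}\nabla p_{\ell_{(i+1)}}^{n+\theta^n} = K_{\ell,(i)}\nabla\delta^p_{(i+1)} + (K_\ell - K_{\ell,(i)})\nabla p_\ell^{n+\theta^n}$, separating a coercive term in the new iterate from a perturbation controlled by the coefficient increment.

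First I would treat the pressure equation. Testing it with $\delta^p_{(i+1)}$ and integrating by parts, the left side is bounded below by the uniform positivity of $K_{\ell,(i)}$ from \eqref{A11}. On the right, the perturbation terms involve the differences $K_\ell - K_{\ell,(i)}$, $B_a - B_{a,(i)}$ and $\nabla p_c^{n+\theta^n} - \nabla p_{c,(i)}^{n+\theta^n}$, each of size $\mathcal{O}(\delta^a_{(i)})$ by the Lipschitz bounds \eqref{A13}, \eqref{A14} and \eqref{A9}, while the accompanying factors $\nabla p_\ell^{n+\theta^n}$ and $\nabla p_c^{n+\theta^n}$ are bounded in $L^\infty$ by \eqref{A10} and \eqref{A5}. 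After Cauchy--Schwarz and Young this yields the clean bound $\norm[1]{\nabla\delta^p_{(i+1)}}_\Omega \leq C\,\norm[1]{\delta^a_{(i)}}_\Omega$, expressing the pressure error at the new iterate solely through the saturation error at the old one.

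Next comes the saturation equation, tested with $\delta^a_{(i+1)}$. The discrete time derivative contributes the mass term $\tfrac{\phi}{\theta^n\tau^n}\norm[1]{\delta^a_{(i+1)}}_\Omega^2$, the frozen diffusion $K_{a,(i)}$ gives the coercive term $c_{K_a}\norm[1]{\nabla\delta^a_{(i+1)}}_\Omega^2$ via \eqref{A11}, and the remaining contributions are the perturbation $(K_a - K_{a,(i)})\nabla s_a^{n+\theta^n}$ controlled by \eqref{A13} and \eqref{A10}, the genuine coupling $B_{a,(i)}\nabla\delta^p_{(i+1)}$ bounded by \eqref{A12}, and the perturbation $(B_a - B_{a,(i)})\nabla p_\ell^{n+\theta^n}$ controlled by \eqref{A14} and \eqref{A10}. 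Substituting the pressure estimate, every right-hand term takes the form $C\,\norm[1]{\delta^a_{(i)}}_\Omega\norm[1]{\nabla\delta^a_{(i+1)}}_\Omega$, and a Young inequality absorbs half of the coercive gradient term, leaving
\[
\tfrac{\phi}{\theta^n\tau^n}\norm[1]{\delta^a_{(i+1)}}_\Omega^2 + \tfrac{c_{K_a}}{2}\norm[1]{\nabla\delta^a_{(i+1)}}_\Omega^2 \leq C\,\norm[1]{\delta^a_{(i)}}_\Omega^2 .
\]
Multiplying by $\theta^n\tau^n$ and appending $\theta^n\tau^n\norm[1]{\nabla\delta^p_{(i+1)}}_\Omega^2 \leq C\theta^n\tau^n\norm[1]{\delta^a_{(i)}}_\Omega^2$ gives the one-step contraction $\norm[1]{\delta^a_{(i+1)}}_\Omega^2 + \theta^n\tau^n\norm[1]{\nabla\delta^p_{(i+1)}}_\Omega^2 + \theta^n\tau^n\norm[1]{\nabla\delta^a_{(i+1)}}_\Omega^2 \leq \tilde C\theta^n\tau^n\norm[1]{\delta^a_{(i)}}_\Omega^2$, with $\tilde C$ assembled from the constants above. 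Under $\tilde C\theta^n\tau^n < 1$ this is a contraction in $\norm[1]{\delta^a_{(\cdot)}}_\Omega$; dropping the gradient terms gives $\norm[1]{\delta^a_{(i)}}_\Omega^2 \leq (\tilde C\theta^n\tau^n)^i\norm[1]{\delta^a_{(0)}}_\Omega^2$, and reinserting this on the right of the one-step bound yields the asserted estimate.

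The step I expect to be the main obstacle is controlling the consistency terms without spending the smallness factor: coercivity of the frozen operators $K_{\ell,(i)}$, $K_{a,(i)}$ only closes the pressure and saturation-gradient norms, so the contraction factor $\tilde C\theta^n\tau^n$ must come \emph{entirely} from the mass term $\phi/(\theta^n\tau^n)$ after multiplying through by $\theta^n\tau^n$. This is precisely why the statement is conditional on the time step, and why the $W^{1,\infty}$ regularity \eqref{A10} of the exact mid-step solution is indispensable: it is what lets every coefficient-increment term $(\,\cdot - \cdot_{(i)})\nabla(\text{exact})$ be bounded by $\norm[1]{\delta^a_{(i)}}_\Omega$ rather than by a gradient of the error, which would otherwise fail to close. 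Some care is also needed to feed the pressure estimate into the saturation estimate in the correct order, exploiting that the coupling is one-directional at each iterate.
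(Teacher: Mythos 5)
Your proposal is correct and follows essentially the same route as the paper's proof: the same error definitions, the same add-and-subtract decomposition isolating frozen-coefficient coercive terms from coefficient-increment perturbations, testing the pressure error equation with $\delta^\ell_{(i+1)}$ and the saturation error equation with $\delta^a_{(i+1)}$, feeding the pressure bound into the saturation bound, multiplying by $\theta^n\tau^n$, and iterating the resulting one-step contraction. The only differences are cosmetic (notation $\delta^p$ versus $\delta^\ell$, and your slightly more explicit handling of the recursion by first dropping the gradient terms and then reinserting), so there is nothing to add.
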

	\begin{proof}
		Define 
  $\delta^\ell_{(i+1)} 
  = p_\ell^{n+\theta^n} - p_{\ell_{(i+1)}}^{n+\theta^n}$ 
  and 
  $\delta^a_{(i+1)} 
  = s_a^{n+\theta^n} - s_{a_{(i+1)}}^{n+\theta^n}$. We subtract \eqref{eq:system_iterations} from \eqref{eq:system_midstep}, and analyze each equation separately. 
  First, for the liquid pressure $p_\ell$, we have
		\begin{multline*}
			-\nabla \cdot \left(K_{\ell_{(i)}}^{n+\theta^n} \nabla \delta^\ell_{(i+1)}\right) - \nabla \cdot \left((K_\ell^{n+\theta^n} 
			- K_{\ell_{(i)}}^{n+\theta^n})\kappa \nabla p_\ell^{n+\theta^n}\right) 
			\\
			+ \nabla \cdot \left(\lambda_{a_{(i)}}^{n+\theta^n} \kappa \del{\nabla p_c^{n+\theta^n} - \nabla p_{c_{(i+1)}}^{n+\theta^n}}\right) 
		   + \nabla \cdot \left(\kappa(\lambda_a^{n+\theta^n} - \lambda_{a_{(i)}}^{n+\theta^n} )\nabla p_{c}^{n+\theta^n}\right) = 0,
		\end{multline*}
where we have used the notation 
\eqref{eq:new_notaion}.
		We note that this equation is complemented with homogeneous Dirichlet boundary conditions. We multiply by $\delta^\ell_{(i+1)}$, integrate over the domain $\Omega$ and perform integration by parts to obtain
		\begin{multline*}
			\int_\Omega K_{\ell_{(i)}}^{n+\theta^n} \abs{\nabla \delta^\ell_{(i+1)}}^2 \dif x + \int_\Omega (K_\ell^{n+\theta^n} 
			- K_{\ell_{(i)}}^{n+\theta^n}) \nabla p_\ell^{n+\theta^n} \cdot \nabla \delta^\ell_{(i+1)} \dif x
			\\
			- \int_\Omega \kappa \lambda_{a_{(i)}}^{n+\theta^n}  \del{\nabla p_c^{n+\theta^n} - \nabla p_{c_{(i+1)}}^{n+\theta^n}} \cdot \nabla \delta^\ell_{(i+1)} \dif x
			- \int_\Omega \kappa (\lambda_a^{n+\theta^n}  - \lambda_{a_{(i)}}^{n+\theta^n} )\nabla p_{c}^{n+\theta^n} \cdot \nabla \delta^\ell_{(i+1)} \dif x = 0.
		\end{multline*}
	%
		Rearranging terms, using hypotheses 
  \eqref{A1}, \eqref{A3}, \eqref{A11}, and the Cauchy-Schwarz inequality we obtain
		\begin{multline*}
			c_{K_\ell}  \norm[1]{\nabla \delta^\ell_{(i+1)}}_\Omega^2 
\leq  \int_\Omega (K_{\ell_{(i)}}^{n+\theta^n} - K_\ell^{n+\theta^n} )\nabla p_\ell^{n+\theta^n} \cdot \nabla \delta^\ell_{(i+1)} \dif x
\\
			+ \kappa^* C_{\lambda_a} \int_\Omega  \del{\nabla p_c^{n+\theta^n} - \nabla p_{c_{(i+1)}}^{n+\theta^n}} \cdot \nabla \delta^\ell_{(i+1)} \dif x
			+ \kappa^*  \int_\Omega  (\lambda_a^{n+\theta^n}  - \lambda_{a_{(i)}}^{n+\theta^n} )\nabla p_{c}^{n+\theta^n} \cdot \nabla \delta^\ell_{(i+1)} \dif x ,
		\end{multline*}
which by employing 
  hypotheses 
  \eqref{A6}, \eqref{A9} and \eqref{A13} 
  further implies
		%
		\begin{multline*}
			c_{K_\ell} \norm[1]{\nabla \delta^\ell_{(i+1)}}_\Omega^2 
			\leq L_{K_\ell} \norm[1]{\delta_{(i)}^a\nabla p_\ell^{n+\theta^n}}_\Omega \norm[1]{\nabla \delta^\ell_{(i+1)}}_\Omega + \kappa^* C_{\lambda_a} L_{\nabla p_c}\norm[1]{\delta_{(i)}^a}_\Omega \norm[1]{\nabla \delta^\ell_{(i+1)}}_\Omega
\\
			+ \kappa^* L_{\lambda_a} \norm[1]{\delta_{(i)}^a\nabla p_{c}^{n+\theta^n}}_\Omega \norm[1]{\nabla \delta^\ell_{(i+1)}}_\Omega.
\label{eq:error_pl_2}
		\end{multline*}
Now we utilize hypotheses \eqref{A5}, \eqref{A10} 
%
and Young's inequality 
to get
		\begin{equation}
			\norm[1]{\nabla \delta^\ell_{(i+1)}}_\Omega^2 
			\leq 
			\del{\frac{L_{K_\ell} C^\ell_\infty + \kappa^* C_{\lambda_a} L_{\nabla p_c} + \kappa^* L_{\lambda_a}C_{\nabla p_c}}{c_{K_\ell}}}^2 \norm[1]{\delta^a_{(i)}}_\Omega^2.
\label{eq:final_error_pl}
		\end{equation}
		Next we work on the aqueous saturation $s_a$. Proceeding as for the liquid pressure, we obtain
		\begin{multline*}
			\phi \frac{\delta^a_{(i+1)}}{\theta^n \tau^n} - \nabla \cdot \left(K_{a_{(i)}}^{n+\theta^n} \nabla \delta^{a}_{(i+1)}\right) - \nabla \cdot \left((K_{a}^{n+\theta^n} - K_{a_{(i)}}^{n+\theta^n}) \nabla s_a^{n+\theta^n}\right) 
\\
			- \nabla \cdot \left(B_{a_{(i)}}^{n+\theta^n} \nabla \delta^\ell_{(i+1)}\right) - \nabla \cdot \left((B_a^{n+\theta^n} - B_{a_{(i)}}^{n+\theta^n}) \nabla p_\ell^{n+\theta^n}\right) = 0.
		\end{multline*}
		This equation is complemented also with homogeneous Dirichlet boundary conditions. We multiply by $\delta^a_{(i+1)}$, integrate over $\Omega$ and perform integration by parts, to obtain
		\begin{multline*}
			\frac{\phi}{\theta^n \tau^n}\norm[1]{\delta^a_{(i+1)}}_\Omega^2 + \int_\Omega K_{a_{(i)}}^{n+\theta^n}  \abs{\nabla \delta^{a}_{(i+1)}}^2 \dif x + \int_\Omega (K_{a}^{n+\theta^n} - K_{a_{(i)}}^{n+\theta^n}) \nabla s_a^{n+\theta^n} \cdot \nabla \delta^{a}_{(i+1)} \dif x
\\
			+ \int_\Omega B_{a_{(i)}}^{n+\theta^n} \nabla \delta^\ell_{(i+1)} \cdot \nabla \delta^{a}_{(i+1)} \dif x + \int_\Omega (B_a^{n+\theta^n} - B_{a_{(i)}}^{n+\theta^n}) \nabla p_\ell^{n+\theta^n} \cdot \nabla \delta^{a}_{(i+1)} \dif x = 0.
		\end{multline*}
		Rearranging terms and using hypotheses 
  \eqref{A9}, \eqref{A10}, and \eqref{A12}, and the Cauchy-Schwarz inequality, we get
		\begin{multline*}
			\frac{\phi}{\theta^n \tau^n}\norm[1]{\delta^a_{(i+1)}}_\Omega^2 + c_{K_a}  \norm[1]{\nabla \delta^{a}_{(i+1)}}_\Omega^2
			\leq 
			\int_\Omega (K_{a}^{n+\theta^n} - K_{a_{(i)}}^{n+\theta^n}) \nabla s_a^{n+\theta^n} \cdot \nabla \delta^{a}_{(i+1)} \dif x
			\\
			+ C_{B_a} \norm[1]{\nabla \delta^\ell_{(i+1)}}_\Omega \norm[1]{\nabla \delta^{a}_{(i+1)}}_\Omega
			+  L_{B_a} \norm[1]{\delta^a_{(i)}\nabla p_\ell^{n+\theta^n}}_\Omega \norm[1]{\nabla \delta^{a}_{(i+1)}}_\Omega.
		\end{multline*}
		Thanks to hypotheses 
  \eqref{A11}, we 
  have $K_{a}^{n+\theta^n} - K_{a_{(k,k)}}^{n+\theta^n} \leq L_{K_a}\abs[1]{\delta_{(i)}^a}$,
		%
and	thus,
		\begin{multline*}
			\frac{\phi}{\theta^n \tau^n}\norm[1]{\delta^a_{(i+1)}}_\Omega^2 + c_{K_a}  \norm[1]{\nabla \delta^{a}_{(i+1)}}_\Omega^2
			\leq 
			L_{K_a} \norm[1]{\delta^a_{(i)} \nabla s_a^{n+\theta^n}}_\Omega \norm[1]{\nabla \delta^{a}_{(i+1)}}_\Omega 
			\\
			+ C_{B_a} \norm[1]{\nabla \delta^\ell_{(i+1)}}_\Omega \norm[1]{\nabla \delta^{a}_{(i+1)}}_\Omega
			+ L_{B_a} \norm[1]{\delta^a_{(i)} \nabla p_\ell^{n+\theta^n}}_\Omega \norm[1]{\nabla \delta^{a}_{(i+1)}}_\Omega.
		\end{multline*}
		Using hypothesis 
  \eqref{A8} and grouping terms, we obtain
		\begin{equation*}
			\frac{\phi}{\theta^n \tau^n}\norm[1]{\delta^a_{(i+1)}}_\Omega^2 + c_{K_a} \norm[1]{\nabla \delta^{a}_{(i+1)}}_\Omega^2
			\leq 
			\del{L_{K_{a}} C_\infty^a +  L_{B_a}C_\infty^\ell} \norm[1]{\delta_{(i)}^a}_\Omega \norm[1]{\nabla \delta^{a}_{(i+1)}}_\Omega 
			+ C_{B_a}  \norm[1]{\nabla \delta^\ell_{(i+1)}}_\Omega \norm[1]{\nabla \delta^{a}_{(i+1)}}_\Omega,
		\end{equation*}
which by using 
Young's inequality 
yields
		\begin{equation*}
			\frac{2\phi}{\theta^n \tau^n}\norm[1]{\delta^a_{(i+1)}}_\Omega^2 + c_{K_a} \norm[1]{\nabla \delta^{a}_{(i+1)}}_\Omega^2
			\leq 
			\frac{2}{c_{K_a}} \del{L_{K_{a}} C_\infty^a +  L_{B_a}C_\infty^\ell}^2 \norm[1]{\delta_{(i)}^a}_\Omega^2
			+ \frac{2\del[0]{C_{B_a} }^2}{c_{K_a} } \norm[1]{\nabla \delta^\ell_{(i+1)}}_\Omega^2.
		\end{equation*}
		Finally, we substitute in \eqref{eq:final_error_pl}, the error estimate for $p_\ell$, 
  to get the 
  expression for the error in $s_a$:
		\begin{equation}
			\frac{2\phi}{\theta^n \tau^n}\norm[1]{\delta^a_{(i+1)}}_\Omega^2 +  c_{K_a}  \norm[1]{\nabla \delta^{a}_{(i+1)}}_\Omega^2
			\leq 
			C_a \norm[1]{\delta_{(i)}^a}_\Omega^2,
\label{eq:final_error_sa}
		\end{equation}
		where the constant $C_a$ only depends on the coefficients appearing in the equations.
Adding up 
\eqref{eq:final_error_pl} and \eqref{eq:final_error_sa} we obtain
		\begin{equation}
			\frac{2\phi}{\theta^n \tau^n}\norm[1]{\delta^a_{(i+1)}}_\Omega^2 + \norm[1]{\nabla \delta^\ell_{(i+1)}}_\Omega^2 +  c_{K_a} \norm[1]{\nabla \delta^{a}_{(i+1)}}_\Omega^2
			\leq 
			C \norm[1]{\delta_{(i)}^a}_\Omega^2,
\label{eq:estimate1}
		\end{equation}
		where the constant $C$ only depend on the coefficients appearing in the equations. We can rewrite this more conveniently as
		\begin{equation*}
			\norm[1]{\delta^a_{(i+1)}}_\Omega^2 + \theta^n \tau^n\norm[1]{\nabla \delta^\ell_{(i+1)}}_\Omega^2 + \theta^n \tau^n \norm[1]{\nabla \delta^{a}_{(i+1)}}_\Omega^2
			\leq 
			\tilde{C} \theta^n \tau^n \norm[1]{\delta_{(i)}^a}_\Omega^2.
		\end{equation*}
		Applying this relation recursively, we obtain that
		\begin{equation*}
			\norm[1]{\delta^a_{(i)}}_\Omega^2 + \theta^n \tau^n\norm[1]{\nabla \delta^\ell_{(i)}}_\Omega^2 + \theta^n \tau^n \norm[1]{\nabla \delta^{a}_{(i)}}_\Omega^2
			\leq 
			\del[1]{\tilde{C} \theta^n \tau^n}^i \norm[1]{\delta_{(0)}^a}_\Omega^2,
		\end{equation*}
		for any $i \geq 1$. If $\tilde{C} \theta^n \tau^n < 1$, then the $\norm[1]{\nabla \delta^j_{(i)}} \rightarrow 0$ as $i \rightarrow \infty$, for $j=\ell,a$.
	\end{proof}
	\begin{remark}
		We note that $\norm[1]{\nabla \delta^\ell_{(i)}}_\Omega^2$ does not appear on the right hand side of \cref{eq:estimate1}. This is because the equation for $s_a$ in \eqref{eq:system_iterations} uses the liquid pressure at the current iteration instead of the previous one. 
	\end{remark}
	
	\subsection{Energy balance equation and energy
 stability}

First, we will use the relation between the chemical potentials and the capillary pressure, \cref{eq:chemicalpotential-capillarypressure}, to show an energy balance equation at the continuous time level. In particular, the strict dissipativity \cite[pp. 422]{MR4586824} of the DAE system \eqref{eq:PDE_system}.
The following results rely on the relation \cref{eq:chemicalpotential-capillarypressure} which relates the gradient of the free energy with the capillary pressure. In the thermodynamically consistent case of the Gibbs free energy in \cref{eq:free_energy}, $p_c$ has to be chosen according to \cref{eq:nu_a_definition_continuous}. For different capillary pressure models (e.g., Brooks--Corey), the following results would still hold, but the free energy of the system would be different.
	
	\begin{lemma}[Energy stability
 ]
 \label{lemma:energy-balance-continuous}
		The following differential energy balance equation holds
		\begin{equation}
  \label{eq:energy_continuous}
			\partial_t E(s_a) + \norm[1]{\sqrt{\lambda_a \kappa}\nabla p_a}^2_\Omega + \norm[1]{\sqrt{\lambda_\ell \kappa}\nabla p_\ell}^2_\Omega = (q_a, p_a)_\Omega + (q_\ell,p_\ell)_\Omega + (\lambda_a \kappa \nabla p_a \cdot \boldsymbol{n}, p_a)_{\partial \Omega} + \del{\lambda_\ell \kappa \nabla p_\ell \cdot \boldsymbol{n}, p_\ell}_{\partial \Omega},
		\end{equation}
		where $\displaystyle E(s_a) = \int_\Omega \phi F(s_a) \dif x$.
  \label{lem:cont_energy_balance}
	\end{lemma}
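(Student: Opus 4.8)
The plan is to run a standard energy test: test each phase's mass–conservation equation against its own pressure, integrate by parts to expose the Darcy dissipation, sum over the two phases, and finally recombine the two transient terms into $\partial_t E(s_a)$ using the saturation constraint \eqref{eq:saturations_relation} together with the chain rule \eqref{eq:chain_rule}. First I would fix $j\in\{a,\ell\}$ and insert Darcy's law $\boldsymbol{u}_j=-\kappa\lambda_j\nabla p_j$ into \eqref{eq:mass_cons} to write the conservation law as $\phi\partial_t s_j-\nabla\cdot(\kappa\lambda_j\nabla p_j)=q_j$. Multiplying by $p_j$, integrating over $\Omega$, and integrating the divergence term by parts produces the quadratic dissipation term $\norm[1]{\sqrt{\lambda_j\kappa}\nabla p_j}_\Omega^2$ and a boundary flux, yielding for each phase
\begin{equation*}
(\phi\partial_t s_j,p_j)_\Omega + \norm[1]{\sqrt{\lambda_j\kappa}\nabla p_j}_\Omega^2 = (q_j,p_j)_\Omega + (\lambda_j\kappa\nabla p_j\cdot\boldsymbol{n},p_j)_{\partial\Omega}.
\end{equation*}

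Summing this identity over $j=a,\ell$ already reproduces the dissipation, source, and boundary contributions on the right-hand side of \eqref{eq:energy_continuous} verbatim, so the only remaining task is to identify the combined transient term $(\phi\partial_t s_a,p_a)_\Omega+(\phi\partial_t s_\ell,p_\ell)_\Omega$ with $\partial_t E(s_a)$. Differentiating the algebraic constraint \eqref{eq:saturations_relation} in time gives $\partial_t s_\ell=-\partial_t s_a$, which collapses this sum to $(\phi\partial_t s_a,\,p_a-p_\ell)_\Omega$. I would then invoke \eqref{eq:chemicalpotential-capillarypressure} to replace $p_a-p_\ell$ by $\nu_a$, and the chain rule \eqref{eq:chain_rule}, $\nu_a\partial_t s_a=\partial_t F(s_a,s_\ell)$, to rewrite the integrand as $\phi\,\partial_t F$. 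Since $\phi$ is constant in time, pulling the derivative outside the integral gives exactly $\partial_t\!\int_\Omega\phi F\,\dif x=\partial_t E(s_a)$, completing the identity \eqref{eq:energy_continuous}.

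I do not expect a genuine technical obstacle, as the computation is a direct energy estimate; the care is bookkeeping on the signs in the integration by parts (so that the dissipation terms appear with the correct sign and the boundary fluxes land on the right-hand side). The conceptual heart of the statement is the final step, where the chain rule \eqref{eq:chain_rule} converts the pressure-weighted transient terms into a clean time derivative of the stored free energy. This is precisely the identity that has no counterpart in the time-discrete setting, which is what later forces the discrete-gradient (second-order chemical potential) construction; at the continuous level it is available directly through \eqref{eq:chemicalpotential-capillarypressure}--\eqref{eq:chain_rule}, so the balance \eqref{eq:energy_continuous} follows and, with vanishing right-hand side, expresses the strict dissipativity of the system \eqref{eq:PDE_system}.
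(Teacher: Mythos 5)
Your proof is correct, but it is organized differently from the paper's. You test the primitive phase-wise mass-conservation system \eqref{eq:mass_cons} with each phase's own pressure $p_j$, sum, and then collapse the two transient terms via $\partial_t s_\ell=-\partial_t s_a$ and $\nu_a=p_a-p_\ell$; the paper instead works directly with the reformulated $(p_\ell,s_a)$ system \eqref{eq:PDE_system}, testing the saturation equation \eqref{eq:sa_eq} with $\nu_a$ and the global pressure equation \eqref{eq:pl_eq} with $p_\ell$, after rewriting $p_c'\nabla s_a=\nabla p_c=\nabla p_\ell-\nabla p_a$. The two computations are algebraically equivalent -- your test pair $(p_a,p_\ell)$ on the phase equations is exactly the paper's pair $(\nu_a,p_\ell)$ on the reformulated equations, since the pressure equation is the sum of the phase equations -- but the accounting differs: your version is more symmetric and shorter, producing the dissipation, source, and boundary terms of \eqref{eq:energy_continuous} phase by phase with no cross terms, while the paper must cancel the mixed terms $\del{\lambda_a\kappa\nabla p_a,\nabla p_\ell}_\Omega$ between \eqref{eq:sa_aux_energy} and \eqref{eq:pl_aux_energy} and recombine the sources via $q-q_a=q_\ell$. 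What the paper's route buys is that it operates on the system in the form actually discretized, so the continuous proof is a line-by-line template for the discrete energy balance in Lemma \ref{lemma:energy-balance-discrete}, where \eqref{eq:BE_sa} is tested with $\nu_a^{n+1/2}$ and \eqref{eq:BE_pl} with $p_\ell^{n+1/2}$; your phase-wise decomposition has no direct discrete counterpart because the scheme never forms separate phase equations. One small point of rigor: since the lemma concerns solutions of \eqref{eq:PDE_system}, starting from \eqref{eq:mass_cons} implicitly uses the equivalence of the two formulations through \eqref{eq:saturations_relation} and \eqref{eq:algebraic_relations}; this is immediate (subtract \eqref{eq:sa_eq} from \eqref{eq:pl_eq} to recover the liquid-phase equation), but worth a sentence.
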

	
	\begin{proof}
		
Note that \eqref{eq:chain_rule} yields
		\begin{equation}
			\partial_t E(s_a) = \int_\Omega \phi \partial_t F(s_a) \dif x = \int_\Omega \phi \nu_a \partial_t s_a \dif x.
			\label{eq:energy_1}
		\end{equation}
		Testing \cref{eq:sa_eq} with $\nu_a$, we have
		\begin{equation*}
			\del{\phi\partial_t s_a, \nu_a}_\Omega + \del{\nabla \cdot \del{\lambda_a \kappa p_c' \nabla s_a}, \nu_a}_\Omega - \del{\nabla \cdot \del{\lambda_a \kappa  \nabla p_\ell}, \nu_a}_\Omega = (q_a, \nu_a)_\Omega,
		\end{equation*}
		which can be written as
		\begin{equation*}
			\del{\phi\partial_t s_a, \nu_a}_\Omega + \del{\nabla \cdot \del{\lambda_a \kappa  \nabla p_c}, \nu_a}_\Omega - \del{\nabla \cdot \del{\lambda_a \kappa  \nabla p_\ell}, \nu_a}_\Omega = (q_a, \nu_a)_\Omega.
		\end{equation*}		
		Recall that  by the definition of the capillary pressure \eqref{eq:algebraic_relations} we have $p_a = p_\ell - p_{c}$, so $\nabla p_a = \nabla p_\ell - \nabla p_c$.
 Thus,
		\begin{equation*}
			\del{\phi\partial_t s_a, \nu_a}_\Omega - \del{\nabla \cdot \del{\lambda_a \kappa \nabla p_a}, \nu_a}_\Omega  = (q_a, \nu_a)_\Omega,
		\end{equation*}
which by 
integration by parts 
yields
		\begin{equation*}
			\del{\phi\partial_t s_a, \nu_a}_\Omega + \del{\lambda_a \kappa \nabla p_a, \nabla \nu_a}_\Omega  = (q_a, \nu_a)_\Omega + (\lambda_a \kappa \nabla p_a \cdot \boldsymbol{n}, \nu_a)_{\partial \Omega}.
		\end{equation*}
Recalling that by \eqref{eq:chemicalpotential-capillarypressure} and \eqref{eq:algebraic_relations}  we have a relation between the chemical potential and the capillary pressure  $\nu_a = p_a - p_\ell$, 
hence
		\begin{multline}
			\del{\phi\partial_t s_a, \nu_a}_\Omega + \del{\lambda_a \kappa \nabla p_a, \nabla p_a}_\Omega  - \del{\lambda_a \kappa \nabla p_a, \nabla p_\ell}_\Omega = (q_a, p_a)_\Omega - (q_a, p_\ell)_\Omega
			\\
			+ (\lambda_a \kappa \nabla p_a \cdot \boldsymbol{n}, p_a)_{\partial \Omega} - (\lambda_a \kappa \nabla p_a \cdot \boldsymbol{n}, p_\ell)_{\partial \Omega}.
\label{eq:sa_aux_energy}
		\end{multline}
		On the other hand, we multiply \cref{eq:pl_eq} by $p_\ell$ and integrate over $\Omega$ to get
		\begin{equation*}
			-\del{\nabla\cdot\del{\lambda \kappa \nabla p_\ell}, p_\ell}_\Omega + \del{\nabla\cdot \del{\lambda_a\kappa \nabla p_c} ,p_\ell}_\Omega = \del{q,p_\ell}_\Omega.
		\end{equation*}
Recall again that \eqref{eq:algebraic_relations}  gives $p_a = p_\ell - p_{c}$, so $\nabla p_c = \nabla p_\ell - \nabla p_a$, and therefore
		\begin{equation*}
			-\del{\nabla\cdot\del{\lambda \kappa \nabla p_\ell}, p_\ell}_\Omega + \del{\nabla\cdot \del{\lambda_a\kappa\nabla p_\ell} ,p_\ell}_\Omega -  \del{\nabla\cdot \del{\lambda_a\kappa\nabla p_a} ,p_\ell}_\Omega = \del{q,p_\ell}_\Omega.
		\end{equation*}
		Since $\lambda = \lambda_\ell + \lambda_a$, we have
		\begin{equation*}
			-\del{\nabla\cdot\del{\lambda_\ell \kappa \nabla p_\ell}, p_\ell}_\Omega -  \del{\nabla\cdot \del{\lambda_a\kappa\nabla p_a} ,p_\ell}_\Omega = \del{q,p_\ell}_\Omega,
		\end{equation*}
thus integration by parts yields
		\begin{equation}
			\del{\lambda_\ell \kappa \nabla p_\ell, \nabla p_\ell}_\Omega  + \del{\lambda_a\kappa\nabla p_a ,\nabla p_\ell}_\Omega = \del{q,p_\ell}_\Omega + \del{\lambda_\ell \kappa \nabla p_\ell \cdot \boldsymbol{n}, p_\ell}_{\partial \Omega} + \del{\lambda_a \kappa \nabla p_a \cdot \boldsymbol{n}, p_\ell}_{\partial \Omega}.
\label{eq:pl_aux_energy}
		\end{equation}
Finally, adding up equations \eqref{eq:sa_aux_energy} and \eqref{eq:pl_aux_energy} 
gives
		\begin{multline*}
			\del{\phi\partial_t s_a, \nu_a}_\Omega + \del{\lambda_a \kappa \nabla p_a, \nabla p_a}_\Omega  + \del{\lambda_\ell \kappa \nabla p_\ell, \nabla p_\ell}_\Omega = (q_a, p_a)_\Omega + (q_\ell, p_\ell)_\Omega 
			\\
			+ (\lambda_a \kappa \nabla p_a \cdot \boldsymbol{n}, p_a)_{\partial \Omega} + \del{\lambda_\ell \kappa \nabla p_\ell \cdot \boldsymbol{n}, p_\ell}_{\partial \Omega},
		\end{multline*}
which together with \cref{eq:energy_1}
implies the energy balance equation
  \eqref{eq:energy_continuous}.
		%
	\end{proof}
	
Now that we have the relation \eqref{eq:energy_continuous} for the free energy in the continuous case, we turn our attention to the time discretization \eqref{eq:midpoint}. Note that the chain rule, hence  \eqref{eq:chain_rule}, is not valid at the 
semi-discrete in time level. 
In the following lemma, we first obtain an expression for the discrete time derivative of the free energy 
$
E(s_a) = \int_\Omega \phi F(s_a) \dif x$, corresponding to the chain rule relation \eqref{eq:chain_rule}.
\begin{lemma}
The discrete time derivative of the free energy satisfies
		\begin{equation}
			\frac{1}{\tau^n}\del[1]{E(s_a^{n+1}) - E(s_a^n)} = \frac{1}{\tau^n}\int_\Omega \phi \nu_a^{n+1/2}\del[1]{s_a^{n+1} - s_a^n}\dif x,
			\label{eq:discrete_time_energy}
		\end{equation}
		where
\begin{align}
\nu_a^{n+1/2} 
& 
= \gamma_a \frac{\ln(s_a^{n+1}) + \ln(s_a^{n})}{2} + \gamma_a s_a^{n+1/2}\frac{\ln(s_a^{n+1}) - \ln(s_a^{n})}{s_a^{n+1} - s_a^n} - \gamma_a
			- \gamma_\ell \frac{\ln(1-s_a^{n+1}) + \ln(1-s_a^{n})}{2} 
\label{eq:nu_discrete}
\\
& 
- \gamma_\ell \del[1]{1-s_a^{n+1/2}}\frac{\ln(1-s_a^{n+1}) - \ln(1-s_a^{n})}{s_a^{n+1} - s_a^n} + \gamma_\ell + \gamma_{a\ell}\del[1]{1 - 2s_a^{n+1/2}},
\notag
\end{align}
		is a second-order approximation of $\nu_a(s_a(t^{n+1/2}))$.
		\label{lem:discrete_energy_aux}
	\end{lemma}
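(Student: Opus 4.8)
The plan is to reduce \eqref{eq:discrete_time_energy} to a single algebraic identity for the free-energy density. Since $E(s_a) = \int_\Omega \phi F(s_a)\dif x$, the claim \eqref{eq:discrete_time_energy} follows immediately---after multiplying by $\phi$, integrating over $\Omega$, and dividing by $\tau^n$---from the pointwise relation
\begin{equation*}
F(s_a^{n+1}) - F(s_a^n) = \nu_a^{n+1/2}\del{s_a^{n+1} - s_a^n},
\end{equation*}
understood with $s_\ell = 1 - s_a$. Thus the entire content of the balance is this exact identity, and its exactness (as opposed to an approximation) is precisely what yields a balance with no numerical dissipation.

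\textbf{Constructing the discrete gradient.}
I would build the discrete gradient term by term, splitting $F$ from \eqref{eq:free_energy} into the two entropy contributions $\gamma_j s_j\del{\ln s_j - 1}$, $j = a,\ell$, and the quadratic interaction $\gamma_{a\ell}s_a s_\ell$. The quadratic piece is immediate: with $s_\ell = 1 - s_a$,
\begin{equation*}
s_a^{n+1}\del{1 - s_a^{n+1}} - s_a^n\del{1 - s_a^n} = \del{1 - 2s_a^{n+1/2}}\del{s_a^{n+1} - s_a^n},
\end{equation*}
producing the $\gamma_{a\ell}\del{1 - 2s_a^{n+1/2}}$ term of \eqref{eq:nu_discrete}. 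For the entropy terms, writing $g(x) = x\del{\ln x - 1}$, the key tool is the elementary identity
\begin{equation*}
b\ln b - a\ln a = \frac{\ln a + \ln b}{2}\del{b - a} + \frac{a+b}{2}\del{\ln b - \ln a},
\end{equation*}
so that $g(b) - g(a) = \del{\tfrac{\ln a + \ln b}{2} + \tfrac{a+b}{2}\tfrac{\ln b - \ln a}{b-a} - 1}\del{b-a}$. Applying this with $(a,b) = (s_a^n, s_a^{n+1})$ reproduces the three $\gamma_a$ terms of \eqref{eq:nu_discrete}, and applying it with $(a,b) = (1 - s_a^n, 1 - s_a^{n+1})$---so that the relevant average becomes $1 - s_a^{n+1/2}$ and the increment flips sign to $-(s_a^{n+1} - s_a^n)$---reproduces the $\gamma_\ell$ terms. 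Summing and cancelling the common factor $s_a^{n+1} - s_a^n$ yields $\nu_a^{n+1/2}$.

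\textbf{Second-order accuracy, and the main obstacle.}
For the accuracy assertion I would substitute the exact values $s_a(t^n)$, $s_a(t^{n+1})$ into \eqref{eq:nu_discrete}, Taylor-expand each building block about $t^{n+1/2}$, and compare with the closed form \eqref{eq:nu_a_definition_continuous} of $\nu_a$. The favorable structure is that every block is a symmetric approximation: the arithmetic means $\tfrac12\del{\ln s_a^{n+1} + \ln s_a^n}$ and the midpoint $s_a^{n+1/2}$ reproduce $\ln s_a(t^{n+1/2})$ and $s_a(t^{n+1/2})$ to $O((\tau^n)^2)$ because the odd-order terms cancel. I expect the delicate step to be the divided-difference factors $\tfrac{\ln s_a^{n+1} - \ln s_a^n}{s_a^{n+1} - s_a^n}$ and its $\ell$-analogue: one must expand numerator and denominator separately and verify that the ratio approximates $1/s_a(t^{n+1/2})$ (respectively $-1/(1 - s_a(t^{n+1/2}))$) to second order, so that after multiplication by $s_a^{n+1/2}$ (respectively $1 - s_a^{n+1/2}$) the logarithmic-difference and mean-logarithm contributions combine to the correct leading order. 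The same $s_\ell = 1 - s_a$ sign bookkeeping that arises in the $\gamma_\ell$ part of the exact identity is where this accuracy computation is most error-prone; the algebraic identity itself, by contrast, is exact and direct once the mean-value identity above is in hand.
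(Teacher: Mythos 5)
Your proposal is correct, and for the exact identity it is essentially the paper's own argument: your mean-value identity $b\ln b - a\ln a = \tfrac{\ln a + \ln b}{2}(b-a) + \tfrac{a+b}{2}(\ln b - \ln a)$ is precisely the add-and-subtract rearrangement the paper carries out in \eqref{eq:Fnplus1_Fn_difference}, and the quadratic interaction term is factored identically. The one genuinely different choice is in the accuracy part. The paper never expands the logarithmic divided differences $\tfrac{\ln s_a^{n+1}-\ln s_a^n}{s_a^{n+1}-s_a^n}$ directly — the step you single out as the main obstacle. Instead it Taylor-expands the \emph{primitive} $F_a(s_a)=s_a(\ln s_a -1)$ about $s_a(t^{n+1/2})$ at both endpoints, subtracts, and uses the factorization $\bigl(s_a^{n+1}-s_a(t^{n+1/2})\bigr)^2 - \bigl(s_a^{n}-s_a(t^{n+1/2})\bigr)^2 = (s_a^{n+1}-s_a^n)\bigl(s_a^{n+1}+s_a^n-2s_a(t^{n+1/2})\bigr)$ together with $s_a^{n+1}+s_a^n-2s_a(t^{n+1/2}) = \mathcal{O}((\tau^n)^2)$ to get $F_a'(s_a(t^{n+1/2})) = \tfrac{F_a(s_a^{n+1})-F_a(s_a^n)}{s_a^{n+1}-s_a^n} + \mathcal{O}((\tau^n)^2)$; the exact identity then equates this divided difference with the corresponding block of $\nu_a^{n+1/2}$, so the delicate ratio never has to be expanded on its own. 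Your direct route is nevertheless viable: $\tfrac{\ln b - \ln a}{b-a} = \int_0^1 \bigl(a + t(b-a)\bigr)^{-1}\dif t$ is a smooth symmetric function of $(a,b)$, so midpoint symmetry again yields $\mathcal{O}((\tau^n)^2)$ without any issue when $s_a^{n+1}-s_a^n$ is small. What the paper's device buys is a one-line resolution of exactly the bookkeeping you flag as error-prone; what your route buys is block-by-block error control independent of the exactness of the identity.

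One sign check is worth making explicit, since it is where your "reproduces the $\gamma_\ell$ terms" claim needs a caveat. Applying your identity with $(a,b)=(1-s_a^n,\,1-s_a^{n+1})$ gives the middle $\gamma_\ell$ coefficient as $+\gamma_\ell\bigl(1-s_a^{n+1/2}\bigr)\tfrac{\ln(1-s_a^{n+1})-\ln(1-s_a^{n})}{s_a^{n+1}-s_a^n}$, with a \emph{plus} sign when the denominator is $s_a^{n+1}-s_a^n$. This is consistent with the continuous limit $-\gamma_\ell\ln(1-s_a)$ in \eqref{eq:nu_a_definition_continuous} (since the divided difference tends to $-1/(1-s_a)$) and with the paper's own intermediate expansion, but it is opposite to the sign displayed in \eqref{eq:nu_discrete}; the minus there is correct only if the divided difference is written with the natural denominator $(1-s_a^{n+1})-(1-s_a^{n})$. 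So your derivation in fact produces the corrected version of the stated formula, and the sign flip you mention ("the increment flips sign") must be tracked through to the final coefficient rather than assumed to land on the statement as printed.
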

	\begin{proof}
We aim to express the increment in the Gibbs free energy
$F(s_a^{n+1}) - F(s_a^n)$
		\begin{align*}
			F(s_a^{n+1}) - F(s_a^n)
			&= \gamma_a s_a^{n+1}\del[1]{\ln(s_a^{n+1}) - 1} + \gamma_\ell \del[1]{1 - s_a^{n+1}} \del[1]{\ln(1-s_a^{n+1}) - 1} + \gamma_{a\ell} s_a^{n+1}\del[1]{1-s_a^{n+1}}
			\\
			&\quad - \gamma_a s_a^{n}\del[1]{\ln(s_a^{n}) - 1} - \gamma_\ell \del[1]{1 - s_a^{n}} \del[1]{\ln(1-s_a^{n}) - 1} - \gamma_{a\ell} s_a^{n}\del[1]{1-s_a^{n}}
		\end{align*}
in terms of the increments of the aqueous saturation $s_a^{n+1} - s_a^n$.  

First, let us work on the terms $\gamma_a s_a^{n+1}\del[1]{\ln(s_a^{n+1}) - 1} - \gamma_a s_a^{n}\del[1]{\ln(s_a^{n}) - 1}$. We have:
\begin{align}
			s_a^{n+1}\del[1]{\ln(s_a^{n+1}) - 1} - s_a^{n}\del[1]{\ln(s_a^{n}) - 1} 
   & = \frac{1}{2}s_a^{n+1}\del[1]{\ln(s_a^{n}) - 1} + \frac{1}{2}s_a^{n+1}\del[1]{\ln(s_a^{n+1}) - \ln(s_a^{n})}
\label{eq:Fnplus1_Fn_difference}
\\
			& \quad + \frac{1}{2}s_a^{n}\del[1]{\ln(s_a^{n+1}) - 1} + \frac{1}{2}\del[1]{\ln(s_a^{n+1}) - 1}\del[1]{s_a^{n+1} - s_a^n}
			\nonumber
			\\
			& \quad - \frac{1}{2}s_a^{n}\del[1]{\ln(s_a^{n}) - 1} - \frac{1}{2}s_a^{n}\del[1]{\ln(s_a^{n}) - 1}
			\nonumber
			\\
			& = \frac{1}{2}\del[1]{s_a^{n+1} - s_a^n}\del[1]{\ln(s_a^{n+1}) + \ln(s_a^{n})} 
			\nonumber
			\\
			& \quad + \frac{1}{2}\del[1]{s_a^{n+1} + s_a^n}\del[1]{\ln(s_a^{n+1}) - \ln(s_a^{n})}  - \del[1]{s_a^{n+1} - s_a^n}.
\notag
\end{align}
Similarly,
		\begin{align*}
			\del[1]{1-s_a^{n+1}}\del[1]{\ln(1-s_a^{n+1}) - 1} - \del[1]{1-s_a^{n}}\del[1]{\ln(1-s_a^{n}) - 1} & = -\frac{1}{2}\del[1]{s_a^{n+1} - s_a^n}\del[1]{\ln(1-s_a^{n+1}) + \ln(1-s_a^{n})}
			\\
			& \quad - \frac{1}{2}\del[1]{s_a^{n+1} + s_a^n}\del[1]{\ln(1-s_a^{n+1}) - \ln(1-s_a^{n})} 
			\\
			& \quad + \del[1]{s_a^{n+1} - s_a^n} + \del[1]{\ln(1-s_a^{n+1}) - \ln(1-s_a^{n})}.
		\end{align*}
		Moreover,
		\begin{align*}
			s_a^{n+1}\del[1]{1-s_a^{n+1}} - s_a^{n}\del[1]{1-s_a^{n}} = \del[1]{s_a^{n+1} - s_a^n}\del[1]{1 - \del[0]{s_a^{n+1} + s_a^n}} = \del[1]{s_a^{n+1} - s_a^n}\del[1]{1 - 2s_a^{n+1/2}}.
		\end{align*}
		
		Thus, we have
		\begin{align*}
			F(s_a^{n+1}) - F(s_a^n)
			&= \del[1]{s_a^{n+1} - s_a^n} \left\{\gamma_a \frac{\ln(s_a^{n+1}) + \ln(s_a^{n})}{2} + \gamma_a s_a^{n+1/2}\frac{\ln(s_a^{n+1}) - \ln(s_a^{n})}{s_a^{n+1} - s_a^n} - \gamma_a\right.
			\\
			&\quad \left. - \gamma_\ell \frac{\ln(1-s_a^{n+1}) + \ln(1-s_a^{n})}{2} - \gamma_\ell \del[1]{1-s_a^{n+1/2}}\frac{\ln(1-s_a^{n+1}) - \ln(1-s_a^{n})}{s_a^{n+1} - s_a^n} + \gamma_\ell\right.
			\\
			& \quad \left. + \gamma_{a\ell}\del[1]{1 - 2s_a^{n+1/2}} \right\}
			\\
			&\equiv \del[1]{s_a^{n+1} - s_a^n} \nu_a^{n+1/2},
		\end{align*}
		where $\nu_a^{n+1/2}$ is given by \eqref{eq:nu_discrete}. 
  Then \eqref{eq:discrete_time_energy}, the discrete analog of the chain rule \eqref{eq:energy_1}, follows after recalling the definition of the free energy $E(s_a^n) = \int_\Omega\phi F(s_a^n)\dif x$. 
  
  Next, we show that $\nu_a^{n+1/2}$ given by \eqref{eq:nu_discrete} is a second-order approximation of $\nu_a(s_a(t^{n+1/2}))  = F'(s_a(t^{n+1/2}))$ (from  \eqref{eq:chain_rule} evaluated at $t^{n+1/2}$). 
  Recall that by \eqref{eq:free_energy} 
  and \eqref{eq:saturations_relation}
  the free Gibbs energy is $F(s_a) = \gamma_a s_a\del[1]{\ln(s_a) - 1} + \gamma_\ell \del[1]{1 - s_a} \del[1]{\ln(1-s_a) - 1} + \gamma_{a\ell} s_a\del[1]{1-s_a}$. Let us denote by $F_a(s_a)$ the terms that contain $\gamma_a$, i.e., $F_a(s_a) = s_a\del[1]{\ln(s_a) - 1}$. 
  Then, 
  a Taylor series expansion about $s_a(t^{n+1/2})$ 
  yields
		\begin{multline*}
			F_a(s_a^{n+1}) = F_a(s_a(t^{n+1/2})) + \del{s_a^{n+1} - s_a(t^{n+1/2})}F_a'(s_a(t^{n+1/2})) + \frac{1}{2}\del{s_a^{n+1} - s_a(t^{n+1/2})}^2F_a''(s_a(t^{n+1/2})) 
			\\
			+ \mathcal{O}\del{(s_a^{n+1} - s_a(t^{n+1/2}))^3},
		\end{multline*}
and similarly,
		\begin{multline*}
			F_a(s_a^{n}) = F_a(s_a(t^{n+1/2})) + \del{s_a^{n} - s_a(t^{n+1/2})}F_a'(s_a(t^{n+1/2})) + \frac{1}{2}\del{s_a^{n} - s_a(t^{n+1/2})}^2F_a''(s_a(t^{n+1/2})) 
			\\
			+ \mathcal{O}\del{(s_a^{n} - s_a(t^{n+1/2}))^3}.
		\end{multline*}
Assuming $s_a$ is smooth enough in time, we have $(s_a^{n+1} - s_a(t^{n+1/2}))^3 = \mathcal{O}((\tau^n)^3)$ and $(s_a^{n} - s_a(t^{n+1/2}))^3 = \mathcal{O}((\tau^n)^3)$, 
and therefore
		\begin{multline*}
			(s_a^{n+1} - s_a^n)F_a'(s_a(t^{n+1/2})) = F_a(s_a^{n+1}) - F_a(s_a^n)
\\
			+ \frac{1}{2}\sbr{\del{s_a^{n+1} - s_a(t^{n+1/2})}^2 - \del{s_a^{n} - s_a(t^{n+1/2})}^2}F_a''(s_a(t^{n+1/2})) + \mathcal{O}((\tau^n)^3).
		\end{multline*}
Note that
		\begin{equation*}
			\del{s_a^{n+1} - s_a(t^{n+1/2})}^2 - \del{s_a^{n} - s_a(t^{n+1/2})}^2 = (s_a^{n+1} - s_a^n)(s_a^{n+1} - 2s_a(t^{n+1/2}) + s_a^n),
		\end{equation*}
where, also by Taylor expansion, 
$(s_a^{n+1} - 2s_a(t^{n+1/2}) + s_a^n)$ is of order $(\tau^n)^2$. 
Thus
		$$
		F_a'(s_a(t^{n+1/2})) = \frac{F_a(s_a^{n+1}) - F_a(s_a^n)}{s_a^{n+1} - s_a^n}  + C(\tau^n)^2.
		$$
		As in the expression \eqref{eq:Fnplus1_Fn_difference} above, we 
  obtain
		$$
		\abs{F_a'(s_a(t^{n+1/2})) -  \del{\frac{\ln(s_a^{n+1}) + \ln(s_a^{n})}{2} +  s_a^{n+1/2}\frac{\ln(s_a^{n+1}) - \ln(s_a^{n})}{s_a^{n+1} - s_a^n} -1}} \leq C(\tau^n)^2.
		$$
		Proceeding in a similar manner with the other terms of 
  $\nu_a(s_a^{n+1/2})$ 
  yields
		$$
		\abs{\nu_a(s_a(t^{n+1/2})) - \nu_a^{n+1/2}} \leq C(\tau^n)^2,
		$$
which concludes the argument.		
	\end{proof}
	
	\begin{remark}
		We note that Lemma \ref{lem:discrete_energy_aux} is independent of the choice of the time-stepping scheme. However, the expression \eqref{eq:nu_discrete} appears naturally from the discrete time derivative of the free energy. 
  Moreover, \eqref{eq:nu_discrete}  
  gives a second-order approximation of the chemical potential $\nu_a(s_a(t^{n+1/2}))$, which makes the midpoint method (i.e., taking $\theta^n=1/2$) a natural choice to obtain 
  a discrete version of the energy balance relation \eqref{eq:energy_continuous},
  with zero numerical dissipation.
	\end{remark}
	
	Next, we show that solution of the midpoint 
 method \eqref{eq:midpoint} satisfies a discrete-time energy balance equation, similar to the one obtained at the continuous-time level in Lemma \ref{lemma:energy-balance-continuous}.
 We emphasize here the essential role played by the relation between the chemical potential and the capillary pressure \eqref{eq:chemicalpotential-capillarypressure}. Since $\nu_a = -p_c$, then \cref{eq:nu_discrete} also gives us a second-order approximation to $p_c(s_a(t^{n+1/2}))$. This  enforces at discrete-time level $t^{n+1/2}$ the equality between  $p_c^{n+1/2}$ (computed with the midpoint method) and the chemical potential $\nu_a^{n+1/2}$.
	
	\begin{lemma}
 \label{lemma:energy-balance-discrete}
		The time discretization \cref{eq:system_iterations,eq:forward_step}, equivalently \eqref{eq:midpoint}, with $\theta^n = 1/2$ satisfy the following discrete energy balance equation
		%
\begin{align}
\label{eq:energy_discrete}
& 
\frac{1}{\tau^n}\del[1]{E(s_a^{n+1}) - E(s_a^n)} + \norm[1]{\sqrt{\lambda_\ell^{n+1/2} \kappa} \nabla p_\ell^{n+1/2}}_\Omega^2 + \norm[1]{\sqrt{\lambda_{a}^{n+1/2}\kappa} \nabla p_a^{n+1/2}}_\Omega^2  
= \del[1]{q_\ell^{n+1/2}, p_\ell^{n+1/2}}_\Omega \\
&
			+ \del[1]{q_a^{n+1/2}, p_a^{n+1/2}}_\Omega 
			+ \del{\lambda_\ell^{n+1/2} \kappa \nabla p_\ell^{n+1/2} \cdot \boldsymbol{n}, p_\ell^{n+1/2}}_{\partial \Omega} + \del{\lambda_a^{n+1/2} \kappa \nabla p_a^{n+1/2} \cdot \boldsymbol{n}, p_a^{n+1/2}}_{\partial \Omega}.
   \notag
\end{align}
		%
		\label{lem:discrete_energy_stab}
 \end{lemma}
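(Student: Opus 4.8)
The plan is to mirror, step by step, the continuous-time argument of \Cref{lemma:energy-balance-continuous}, replacing the chain-rule identity \eqref{eq:chain_rule} by its discrete counterpart \eqref{eq:discrete_time_energy} furnished by \Cref{lem:discrete_energy_aux}. Taking $\theta^n=1/2$ in \eqref{eq:forward_step} gives $s_a^{n+1/2}=\tfrac12\del[1]{s_a^{n+1}+s_a^n}$, so every coefficient carrying the superscript $n+1/2$ in \eqref{eq:midpoint} is evaluated at this midpoint state. The central structural fact I would exploit is the thermodynamic consistency \eqref{eq:chemicalpotential-capillarypressure}, $\nu_a=-p_c=p_a-p_\ell$, which at the discrete level enforces the exact identity $\nu_a^{n+1/2}=-p_c^{n+1/2}=p_a^{n+1/2}-p_\ell^{n+1/2}$, where $p_a^{n+1/2}:=p_\ell^{n+1/2}-p_c^{n+1/2}$. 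This is precisely the point isolated in the remark preceding the lemma, and it is what drives the zero-dissipation conclusion.

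First I would test the saturation equation in \eqref{eq:midpoint} with $\nu_a^{n+1/2}$. The time term produces $\del[1]{\phi\,\tfrac{s_a^{n+1}-s_a^n}{\tau^n},\nu_a^{n+1/2}}_\Omega$, which by \eqref{eq:discrete_time_energy} equals $\tfrac{1}{\tau^n}\del[1]{E(s_a^{n+1})-E(s_a^n)}$; this is the discrete analogue of \eqref{eq:energy_1}, and the only place where the nontrivial discrete-gradient structure of $\nu_a^{n+1/2}$ enters. Using the spatial chain rule $p_c'^{n+1/2}\nabla s_a^{n+1/2}=\nabla p_c^{n+1/2}$ (valid because differentiation is in space at the fixed time level) together with $\nabla p_a^{n+1/2}=\nabla p_\ell^{n+1/2}-\nabla p_c^{n+1/2}$, the two divergence terms collapse into $-\nabla\cdot\del[1]{\lambda_a^{n+1/2}\kappa\nabla p_a^{n+1/2}}$. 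Integrating by parts and replacing $\nu_a^{n+1/2}$ by $p_a^{n+1/2}-p_\ell^{n+1/2}$ yields the discrete counterpart of \eqref{eq:sa_aux_energy}, carrying the source term $\del[1]{q_a^{n+1/2},p_a^{n+1/2}}_\Omega-\del[1]{q_a^{n+1/2},p_\ell^{n+1/2}}_\Omega$ and the matching boundary contributions.

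Next I would test the pressure equation in \eqref{eq:midpoint} with $p_\ell^{n+1/2}$. Writing $\nabla p_c^{n+1/2}=\nabla p_\ell^{n+1/2}-\nabla p_a^{n+1/2}$ and using $\lambda=\lambda_\ell+\lambda_a$ to split the leading term, then integrating by parts, reproduces the discrete version of \eqref{eq:pl_aux_energy}. Adding the two identities, the cross terms $\pm\del[1]{\lambda_a^{n+1/2}\kappa\nabla p_a^{n+1/2},\nabla p_\ell^{n+1/2}}_\Omega$ cancel, as do the boundary terms tested against $p_\ell^{n+1/2}$, while the sources combine through $q=q_\ell+q_a$ into $\del[1]{q_\ell^{n+1/2},p_\ell^{n+1/2}}_\Omega+\del[1]{q_a^{n+1/2},p_a^{n+1/2}}_\Omega$. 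Finally, substituting the time term by $\tfrac{1}{\tau^n}\del[1]{E(s_a^{n+1})-E(s_a^n)}$ from \Cref{lem:discrete_energy_aux} and rewriting the diffusion bilinear forms as the weighted $L^2$ norms $\norm[1]{\sqrt{\lambda_\ell^{n+1/2}\kappa}\nabla p_\ell^{n+1/2}}_\Omega^2$ and $\norm[1]{\sqrt{\lambda_a^{n+1/2}\kappa}\nabla p_a^{n+1/2}}_\Omega^2$ delivers \eqref{eq:energy_discrete}.

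The main obstacle, and the only place the argument departs from a direct transcription of the continuous proof, is justifying the \emph{exact} discrete identity $\nu_a^{n+1/2}=-p_c^{n+1/2}$. Merely knowing from \Cref{lem:discrete_energy_aux} that $\nu_a^{n+1/2}$ approximates $\nu_a(s_a(t^{n+1/2}))$ to second order would leave an $\mathcal{O}((\tau^n)^2)$ defect in the tested saturation equation, i.e. spurious numerical dissipation. The key is therefore to feed into the scheme the thermodynamically consistent capillary pressure $p_c^{n+1/2}=-\nu_a^{n+1/2}$, the second-order discrete gradient of the free energy given in \eqref{eq:nu_discrete}; with that choice the test function $\nu_a^{n+1/2}$ and the capillary-pressure term are the same object, the chain-rule defect vanishes identically, and the balance \eqref{eq:energy_discrete} holds with no dissipative remainder, exactly paralleling \eqref{eq:energy_continuous}.
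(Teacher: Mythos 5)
Your proposal is correct and follows essentially the same route as the paper: test the saturation equation with the discrete chemical potential $\nu_a^{n+1/2}$ (the paper tests \eqref{eq:BE_sa} and \eqref{eq:FE_sa} each with $\tfrac12\nu_a^{n+1/2}$ and adds, which for $\theta^n=1/2$ is the same as testing \eqref{eq:midpoint} directly), test the pressure equation with $p_\ell^{n+1/2}$, cancel the cross and boundary terms, and invoke \Cref{lem:discrete_energy_aux} for the time term. You also correctly isolate the paper's key move of enforcing the exact identity $p_a^{n+1/2}=\nu_a^{n+1/2}+p_\ell^{n+1/2}$ (equivalently $p_c^{n+1/2}=-\nu_a^{n+1/2}$), without which an $\mathcal{O}((\tau^n)^2)$ dissipative defect would remain.
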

	\begin{proof}
		Test \cref{eq:BE_sa} and \cref{eq:FE_sa} with $(1/2) \nu_a^{n+1/2}$, where $\nu_a^{n+1/2}$ is given by \eqref{eq:nu_discrete}, and add up to obtain
		\begin{multline*}
			\frac{1}{\tau^n}\del{\phi(s_a^{n+1} - s_a^n), \nu_a^{n+1/2}}_\Omega + \del{\nabla \cdot \del{\lambda_{a}^{n+1/2}\kappa \nabla p_c^{n+1/2}}, \nu_a^{n+1/2}}_\Omega 
	\\
			- \del{\nabla\cdot\del{\lambda_a^{n+1/2}\kappa\nabla p_\ell^{n+1/2}}, \nu_a^{n+1/2}}_\Omega = \del[1]{q_a^{n+1/2},\nu_a^{n+1/2}}_\Omega.
		\end{multline*}
Similarly to
  the continuous case, we 
  write this as
		\begin{equation*}
			\frac{1}{\tau^n}\del{\phi(s_a^{n+1} - s_a^n), \nu_a^{n+1/2}}_\Omega - \del{\nabla \cdot \del{\lambda_{a}^{n+1/2}\kappa \nabla p_a^{n+1/2}}, \nu_a^{n+1/2}}_\Omega = \del[1]{q_a^{n+1/2},\nu_a^{n+1/2}}_\Omega,
		\end{equation*}
and apply integration by parts to get
		\begin{multline*}
			\frac{1}{\tau^n}\del{\phi(s_a^{n+1} - s_a^n), \nu_a^{n+1/2}}_\Omega + \del{\lambda_{a}^{n+1/2}\kappa \nabla p_a^{n+1/2}, \nabla \nu_a^{n+1/2}}_\Omega = \del[1]{q_a^{n+1/2},\nu_a^{n+1/2}}_\Omega 
\\
			+ \del{\lambda_{a}^{n+1/2}\kappa \nabla p_a^{n+1/2}\cdot \boldsymbol{n}, \nu_a^{n+1/2}}_{\partial \Omega}.
		\end{multline*}
  Recall that, at the continuous level, we have $\nu_a = p_a - p_{\ell}$. At the discrete level, we have $\nu_a^{n+1/2}$ defined by \cref{eq:nu_discrete}, and $p_\ell^{n+1/2}$ which comes from the solution of the discrete system \cref{eq:system_midstep}. Therefore, to enforce this condition, we set $p_a^{n+1/2} = \nu_a^{n+1/2}+p_\ell^{n+1/2}$. This yields
		%
		\begin{multline}
			\frac{1}{\tau^n}\del{\phi(s_a^{n+1} - s_a^n), \nu_a^{n+1/2}}_\Omega + \del{\lambda_{a}^{n+1/2}\kappa \nabla p_a^{n+1/2}, \nabla p_a^{n+1/2}}_\Omega - \del{\lambda_{a}^{n+1/2}\kappa \nabla p_a^{n+1/2}, \nabla p_\ell^{n+1/2}}_\Omega 
			\\
			= \del[1]{q_a^{n+1/2},\nu_a^{n+1/2}}_\Omega
			+ \del{\lambda_{a}^{n+1/2}\kappa \nabla p_a^{n+1/2}\cdot \boldsymbol{n}, \nu_a^{n+1/2}}_{\partial \Omega}.
\label{eq:sa_aux_energy_disc}
		\end{multline}
  
On the other hand, test \cref{eq:BE_pl} with $p_\ell^{n+1/2}$, and use 
$\nabla p_c = \nabla p_\ell - \nabla p_a$, 
$\lambda = \lambda_a + \lambda_\ell$,
to obtain 
		\begin{equation*}
			-\del{\nabla \cdot \left(\lambda_\ell^{n+1/2} \kappa \nabla p_\ell^{n+1/2}\right), p_\ell^{n+1/2} } _\Omega
			- \del{\nabla \cdot \left(\lambda_a^{n+1/2} \kappa \nabla p_{a}^{n+1/2}\right), p_\ell^{n+1/2}}_\Omega = \del[1]{q^{n+1/2}, p_\ell^{n+1/2}}_\Omega,
		\end{equation*}
which, integrating 
by parts, gives
		\begin{align}
& 
\del{\lambda_\ell^{n+1/2} \kappa \nabla p_\ell^{n+1/2}, \nabla p_\ell^{n+1/2} } _\Omega
			+ \del{\lambda_a^{n+1/2} \kappa \nabla p_{a}^{n+1/2}, \nabla p_\ell^{n+1/2}}_\Omega 
\label{eq:pl_aux_energy_disc}
\\
   & 
   = \del[1]{q^{n+1/2}, p_\ell^{n+1/2}} _\Omega
			+ \del{\lambda_\ell^{n+1/2} \kappa \nabla p_\ell^{n+1/2} \cdot \boldsymbol{n}, p_\ell^{n+1/2}}_{\partial \Omega} + \del{\lambda_a^{n+1/2} \kappa \nabla p_a^{n+1/2} \cdot \boldsymbol{n}, p_\ell^{n+1/2}}_{\partial \Omega}.
\notag
\end{align}
Finally, adding up \eqref{eq:sa_aux_energy_disc} and \eqref{eq:pl_aux_energy_disc}, and using \eqref{eq:discrete_time_energy}, 
yields the discrete energy balance \cref{eq:energy_discrete}.
	\end{proof}

 \begin{remark}
     We note that there are no numerical dissipation terms in \eqref{lem:discrete_energy_stab}.
     The result \eqref{eq:energy_discrete} in Lemma \ref{lem:discrete_energy_stab} is the discrete-time equivalent of the result \eqref{eq:energy_continuous} in Lemma \ref{lem:cont_energy_balance} for the continuous case. 
This conservation of the structural discrete-time energy balance, 
respectively of the discrete-time Dirac structure, 
represents an extension of the notion of symplecticity of geometric integration schemes to open systems \cite{MR4001128,MR4586824,10.5555/553988}.
 \end{remark}
	\begin{remark}
Note that if there are no forcing terms, i.e., $q_j= 0$, and $\lambda_j^{n+1/2} \kappa \nabla p_j^{n+1/2} \cdot \boldsymbol{n}|_{\partial\Omega} = 0$, for $j=a,\ell$, then Lemma \ref{lem:discrete_energy_stab} implies the stability of the method \eqref{eq:midpoint}, in the sense that the free energy dissipates:
		\begin{equation*}
			E(s_a^{n+1}) + \tau^n \norm[1]{\sqrt{\lambda_\ell^{n+1/2} \kappa} \nabla p_\ell^{n+1/2}}_\Omega^2 + \tau^n \norm[1]{\sqrt{\lambda_{a}^{n+1/2}\kappa} \nabla p_a^{n+1/2}}_\Omega^2 = E(s_a^n).
		\end{equation*}
	\end{remark}
	
\section{Numerical results}
\label{sec:numerical_results}
In this section, we 
test the performance of the numerical method in regards to rates of convergence, long-time discretization errors, and free energy errors. 
For the spatial discretization of the problem we use an interior penalty discontinuous Galerkin method with piecewise polynomials of order $k$, see e.g., ~\cite{sosajones}. In~\cite{sosajones} it is proven that the spatial discretization error is of order $k+1$. 

For the 
first two tests, we use the method of manufactured solutions, i.e., the right hand sides (sinks/sources) and boundary conditions are calculated so that the exact solution of the problem \eqref{eq:PDE_system} is
\begin{equation}
    p_\ell = \frac{e^t}{e^T}(2 + xy^2 + x^2\sin(y)), \quad s_a = \frac{e^t(2 + x^2y^2 + \cos(x))}{8 e^T},
\label{eq:analytical_sol}
\end{equation}
where $T$ is the final time of the simulation. The rest of the parameters are given by
\begin{align*}
    &\kappa_\ell = s_\ell(s_\ell + s_a)(1-s_a), \quad \kappa_a = s_a^2,\quad \kappa = 1, \quad \phi=0.2,
    \\
    &p_{c}(s_a) = \frac{6.3}{\ln(0.01)}\ln(s_a), \quad \mu_\ell = 0.75, \quad \mu_a = 0.5.
\end{align*}
%
The iterations in the backward Euler step \eqref{eq:BE-iteration} at time level $t^{n+\theta^n}$ are stopped when 
\begin{equation}
        \max\left\{\frac{\norm[1]{p^{n+\theta^n}_{\ell_{(i+1)}} - p^{n+\theta^n}_{\ell_{(i)}}}_\Omega}{\norm[1]{p^{n+\theta^n}_{\ell_{(i)}}}_\Omega}, \frac{\norm[1]{s^{n+\theta^n}_{a_{(i+1)}} - s^{n+\theta^n}_{a_{(i)}}}_\Omega}{\norm[1]{s^{n+\theta^n}_{a_{(i)}}}_\Omega}\right\} < \text{TOL},
        \label{eq:stop_crit}
\end{equation}
where $\text{TOL}$ is a user-given parameter.

We will compare four methods: the midpoint method, i.e., $\theta^n = 1/2$ in \eqref{eq:midpoint}, which we refer to as \textbf{MP}; the backward Euler method ($\theta^n=1$ in \eqref{eq:midpoint}), referred to as \textbf{BE}; and a first and second-order time-lagging schemes, which we denote by 
\ref{eq:first_lag} and 
\ref{eq:second_lag}, respectively. 
These time-lagging schemes do not require a subiteration, but instead, the coefficients that depend on the primary unknowns at time level $t^{n+1}$ are evaluated using first and second-order extrapolations of the primary unknowns. 
The first-order time-lagging method is given by
    \begin{align}
\label{eq:first_lag}
\tag{TL1}    
\begin{array}{l}
        -\nabla \cdot \left(\lambda^{n} \kappa \nabla p_{\ell}^{n+1}\right) 
        = q^{n+1} - \nabla \cdot \left(\lambda_{a}^{n} \kappa \nabla p_{c}^{n}\right),
\displaystyle
        \\
\displaystyle        
        \phi \frac{s_{a}^{n+1} - s_a^n}{\tau} + \nabla \cdot \left(\lambda_{a}^{n} \kappa p'^{n}_{c} \nabla s_{a}^{n+1}\right)  
        = q_a^{n+1} + \nabla \cdot \left(\lambda_{a}^{n} \kappa \nabla p_{\ell}^{n+1}\right).
\end{array}
    \end{align}
The second-order time-lagging scheme is based on
    \begin{align}
\label{eq:second_lag}
\tag{TL2}
\begin{array}{ll}
\displaystyle
-\nabla \cdot \left(\del[1]{2\lambda^{n} -\lambda^{n-1}}\kappa \nabla p_{\ell}^{n+1}\right) 
= q^{n+1} - \nabla \cdot \left(\del[1]{2\lambda_{a}^{n} -\lambda_a^{n-1}}\kappa \nabla \del[1]{2p_{c}^{n} - p_c^{n-1}} \right),
        \\
\displaystyle        
        \phi \frac{3s_{a}^{n+1} - 4s_a^n + s_a^{n-1}}{2\tau} + \nabla \cdot \left(\del[1]{2\lambda_{a}^{n} -\lambda_a^{n-1}} \kappa \del[1]{2p_c^{'n} -p_c'^{n-1}} \nabla s_{a}^{n+1}\right)
= q_a^{n+1} + \nabla \cdot \left(\del[1]{2\lambda_{a}^{n} -\lambda_a^{n-1}} \kappa \nabla p_{\ell}^{n+1}\right).
\end{array}
    \end{align}

\subsection{Rates of convergence}
\label{subsec:rates}

First, we 
evaluate the rates of convergence of the method presented in this work using $\theta^n = 1/2$ and $\theta^n = 1$, and we compare them 
with the rates of convergence of the time-lagging schemes \eqref{eq:first_lag} and \eqref{eq:second_lag} described above. 
For the spatial discretization, we use piecewise linear polynomials using a quadrilateral mesh of size $h$. The constant time step $\tau^n$ is taken equal to the mesh size $h$. 
The tolerance in \cref{eq:stop_crit} is taken to be $\text{TOL}=10^{-5}$.  The $L^2$-norm of the error at the final time $T=1$ and the rates of convergence are shown in \Cref{tab:ConvRates_tau_equals_h_MP,tab:ConvRates_tau_equals_h_BE,tab:ConvRates_tau_equals_h_timelag,tab:ConvRates_tau_equals_h_timelag2}.
\begin{center}
    \begin{table}[tbp]
        \centering
        \small
        \caption{Rates of convergence for \textbf{MP} with $\tau = h$, see \Cref{subsec:rates}.}
        \begin{tabular}{lrcccc}
            \toprule
            \multirow{2}{*}{}
            & & \multicolumn{2}{c}{$p_\ell$}  & \multicolumn{2}{c}{$s_a$} \\
            $\tau$ & {DOFs}
            & {$L^2(\Omega)$-error} & {Rate}
            & {$L^2(\Omega)$-error} & {Rate} \\
            \midrule
            {0.5} & {16} & {3.94e-2} & {-} & {1.72e-2} & {-} \\
            {0.25} & {64} & {1.16e-2} & {1.76} & {3.64e-3} & {2.24} \\
            {0.125} & {256}  & {3.16e-3} & {1.88} & {8.04e-4} & {2.18} \\
            {0.0625} & {1,024} & {8.26e-4} & {1.94} & {1.93e-4} & {2.06} \\
            {0.03125} & {4,096} & {2.12e-4} & {1.96} & {4.74e-5} & {2.03} \\
            {0.015625} & {16,384} & {5.38e-5} & {1.98} & {1.17e-5} & {2.02} \\
            {0.0078125} & {65,536} & {1.35e-5} & {1.99} & {2.93e-6} & {2.00} \\
            \bottomrule
        \end{tabular}
        \label{tab:ConvRates_tau_equals_h_MP}
    \end{table}
\end{center}
\begin{center}
\begin{table}[tbp]
    \centering
    \small
    \caption{Rates of convergence for \textbf{BE} with $\tau = h$, see \Cref{subsec:rates}.}
    \begin{tabular}{lrcccc}
        \toprule
        \multirow{2}{*}{}
        & & \multicolumn{2}{c}{$p_\ell$}  & \multicolumn{2}{c}{$s_a$} \\
        $\tau$ & {DOFs}
        & {$L^2(\Omega)$-error} & {Rate}
        & {$L^2(\Omega)$-error} & {Rate} \\
        \midrule
        {0.5} & {16} & {5.21e-2} & {-} & {2.34e-3} & {-} \\
        {0.25} & {64} & {1.37e-2} & {1.93} & {7.74e-4} & {1.60} \\
        {0.125} & {256}  & {3.66e-3} & {1.90} & {3.19e-4} & {1.28} \\
        {0.0625} & {1,024} & {1.00e-3} & {1.87} & {1.53e-4} & {1.06} \\
        {0.03125} & {4,096} & {2.93e-4} & {1.77} & {7.64e-5} & {1.00} \\
        {0.015625} & {16,384} & {9.49e-5} & {1.63} & {3.84e-5} & {0.99} \\
        {0.0078125} & {65,536} & {3.50e-5} & {1.44} & {1.93e-5} & {0.99} \\
        \bottomrule
    \end{tabular}
    \label{tab:ConvRates_tau_equals_h_BE}
\end{table}
\end{center}
\begin{center}
\begin{table}[tbp]
    \centering
    \small
    \caption{Rates of convergence for 
    \ref{eq:first_lag} 
    with $\tau = h$, see \Cref{subsec:rates}.}
    \begin{tabular}{lrcccc}
        \toprule
        \multirow{2}{*}{}
        & & \multicolumn{2}{c}{$p_\ell$}  & \multicolumn{2}{c}{$s_a$} \\
        $\tau$ & {DOFs}
        & {$L^2(\Omega)$-error} & {Rate}
        & {$L^2(\Omega)$-error} & {Rate} \\
        \midrule
        {0.5} & {16} & {4.37e-2} & {-} & {2.91e-2} & {-} \\
        {0.25} & {64} & {8.19e-3} & {2.41} & {1.60e-2} & {0.86} \\
        {0.125} & {256}  & {4.73e-3} & {0.79} & {8.33e-3} & {0.94} \\
        {0.0625} & {1,024} & {3.67e-3} & {0.37} & {4.38e-3} & {0.92} \\
        {0.03125} & {4,096} & {2.28e-3} & {0.69} & {2.28e-3} & {0.94} \\
        {0.015625} & {16,384} & {1.27e-3} & {0.84} & {1.16e-3} & {0.97} \\
        {0.0078125} & {65,536} & {6.74e-4} & {0.91} & {5.92e-4} & {0.97} \\
        \bottomrule
    \end{tabular}    \label{tab:ConvRates_tau_equals_h_timelag}
\end{table}
\end{center}
\begin{center}
\begin{table}[tbp]
    \centering
    \small
    \caption{Rates of convergence for 
    \ref{eq:second_lag} with $\tau = h$, see \Cref{subsec:rates}.}
    \begin{tabular}{lrcccc}
        \toprule
        \multirow{2}{*}{}
        & & \multicolumn{2}{c}{$p_\ell$}  & \multicolumn{2}{c}{$s_a$} \\
        $\tau$ & {DOFs}
        & {$L^2(\Omega)$-error} & {Rate}
        & {$L^2(\Omega)$-error} & {Rate} \\
        \midrule
        {0.5} & {16} & {5.12e-2} & {-} & {1.01e-2} & {-} \\
        {0.25} & {64} & {1.05e-2} & {2.28} & {3.95e-3} & {1.35} \\
        {0.125} & {256}  & {2.58e-3} & {2.02} & {1.08e-3} & {1.87} \\
        {0.0625} & {1,024} & {6.39e-4} & {2.01} & {2.90e-4} & {1.89} \\
        {0.03125} & {4,096} & {1.60e-4} & {1.99} & {7.52e-5} & {1.94} \\
        {0.015625} & {16,384} & {4.02e-5} & {1.99} & {1.91e-5} & {1.97} \\
        {0.0078125} & {65,536} & {1.07e-5} & {1.91} & {4.82e-6} & {1.98} \\
        \bottomrule
    \end{tabular}
    \label{tab:ConvRates_tau_equals_h_timelag2}
\end{table}
\end{center}
As expected, we see that the backward Euler method and the first-order time lagging method both converge with order one. The midpoint method is second-order. Moreover, we observe that as $\tau \rightarrow 0$, the errors with \textbf{MP} are lower than with \textbf{BE}. 

 \subsection{Long-time behavior}
\label{subsec:longtime}
We note that in the context of simulation of multiphase flow in porous media, usually the final time of the simulation is quite large, as undergound flow tends to occur over long periods of time. 
Therefore, it is important for the time-stepping method to remain accurate for large final times. 
We consider the same solution given in \cref{eq:analytical_sol}, with the final time $T=20$. 
Note that $p_\ell$ and $s_a$ in \eqref{eq:analytical_sol} grow exponentially in time, but always remain bounded due to the factor $e^T$.  We consider two different time steps: $\tau^n = 0.05$, and $\tau^n = 1$, for all $n$, and piecewise linear polynomials on a mesh that contains $64\times 64$ quadrilaterals. 

In Figure \ref{fig:longtime_errors} we compare the $L^2$ errors in the liquid pressure and aqueous saturation at each time level, for the \textbf{MP}, \textbf{BE}, 
\ref{eq:first_lag} and 
\ref{eq:second_lag} methods, for both values of the time steps $\tau^n = 0.05$, and $\tau^n = 1$. 
We see that at all $t^{n}$ time values, the errors in the midpoint method values are significantly lower than with the backward Euler method, for $\tau^n = 0.05$. For $\tau^n = 1$, the $L^2$ errors in $s_a$ obtained with \textbf{MP} are not the lowest ones anymore. Since the discretization error for backward Euler is of order $\tau$, whereas for the midpoint it is of order $\tau^2$, it is expected that for $\tau \geq 1$, \textbf{BE} produces lower errors than \textbf{MP}. However, we can observe that \textbf{MP} still produces errors that are comparable to those obtained with \textbf{BE}.
In Table \ref{tab:maxerrors} we report the maximum $L^2$ error over time. 
(i) We observe that for $\tau^n=0.05$, the midpoint method has an error one order of magnitude lower than the backward Euler method for the liquid pressure. (ii) Furthermore, for $\tau^n=1$, the error in $p_\ell$ with the midpoint method is still one order of magnitude lower than the error with the backward Euler method.
\begin{figure}
    \begin{subfigure}[t]{.45\textwidth}
        \centering
        \includegraphics[width=\linewidth]{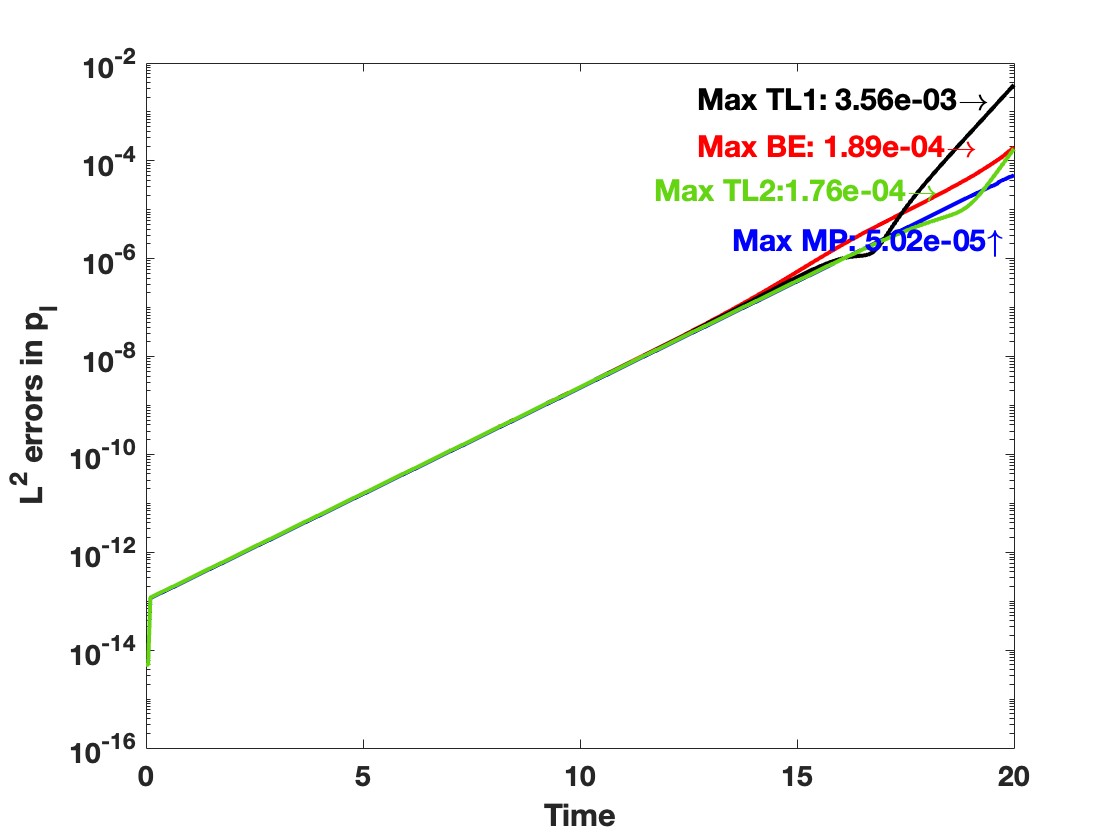}
        \caption{Liquid pressure, $\tau^n = 0.05$.}
        \label{subfig:longtime_pl_0p1}
    \end{subfigure}
    \begin{subfigure}[t]{.45\textwidth}
        \centering
        \includegraphics[width=\linewidth]{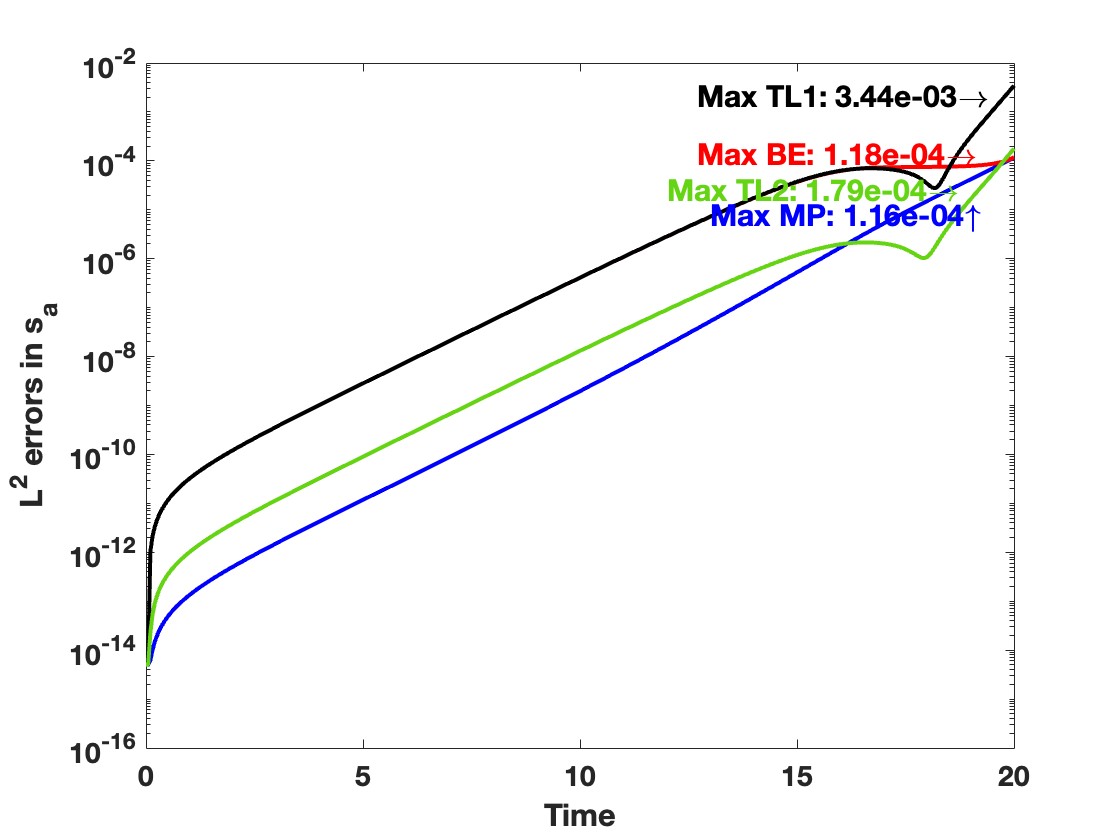}
        \caption{Aqueous saturation, $\tau^n = 0.05$.}
        \label{subfig:longtime_sa_0p1}
    \end{subfigure}
\\
        \begin{subfigure}[t]{.45\textwidth}
    \centering
    \includegraphics[width=\linewidth]{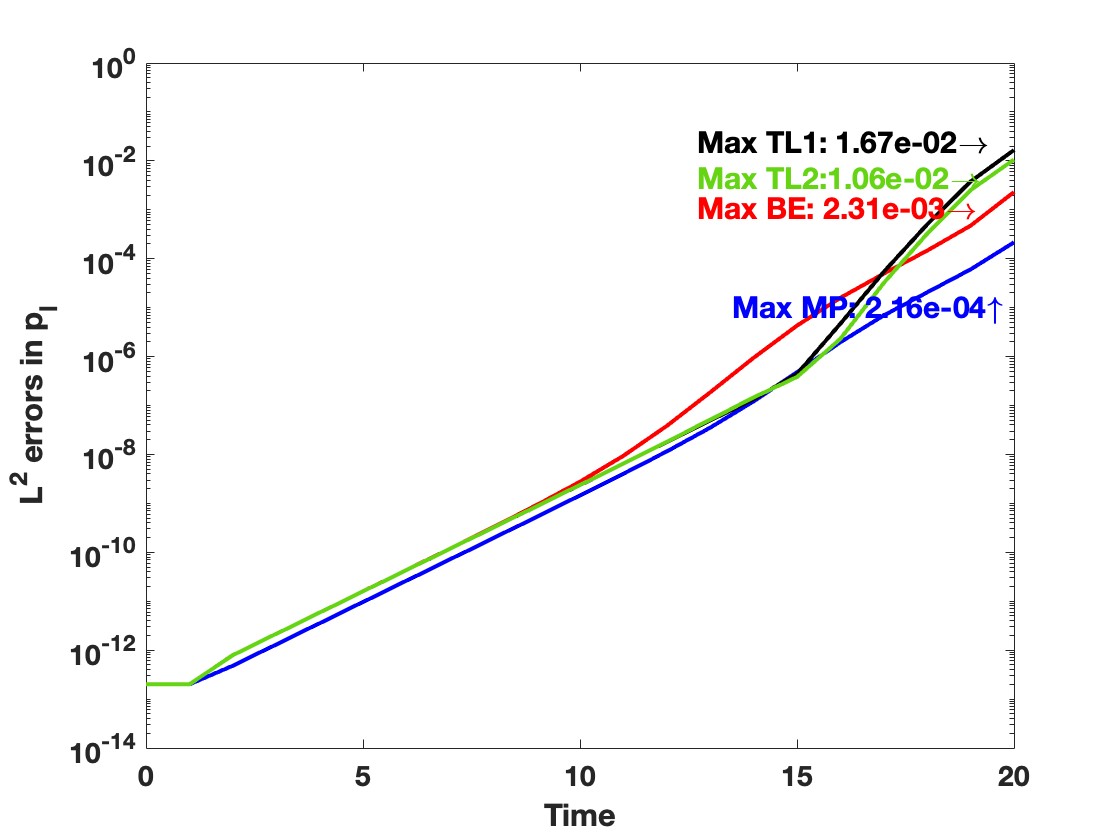}
    \caption{Liquid pressure, $\tau^n = 1$.}
    \label{subfig:longtime_pl_2}
\end{subfigure}
\begin{subfigure}[t]{.45\textwidth}
    \centering
    \includegraphics[width=\linewidth]{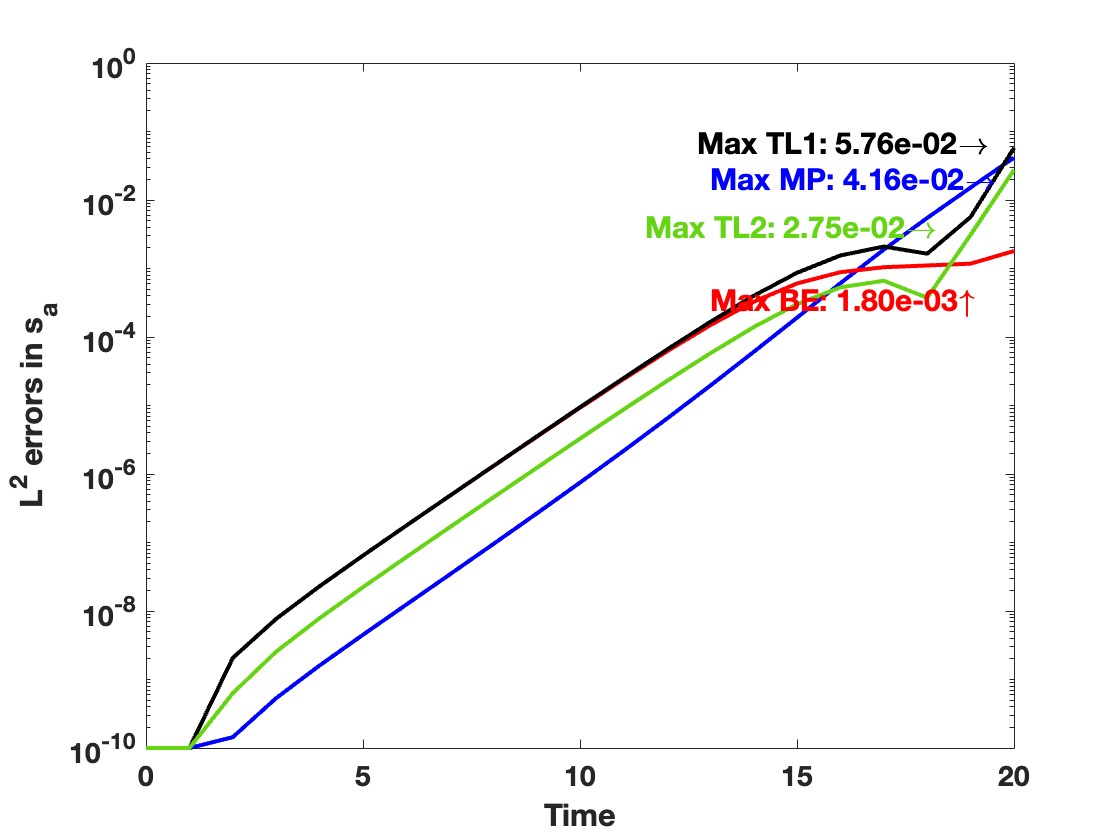}
    \caption{Aqueous saturation, $\tau^n = 1$.}
    \label{subfig:longtime_sa_2}
\end{subfigure}
    \caption{$L^2$ norm of the error in $p_\ell$ (left) and $s_a$ (right) for example in \cref{subsec:longtime}. Top row: $\tau^n = 0.05$ and bottom row: $\tau^n = 1$. Blue line: \textbf{MP}, red line: \textbf{BE}, black line: 
    \ref{eq:first_lag}, green line: 
    \ref{eq:second_lag}.}
    \label{fig:longtime_errors}
\end{figure}
\begin{center}
    \begin{table}[tbp]
        \centering
        \small
        \caption{Maximum values over time of the $L^2$ error in $p_\ell$ and $s_a$ for test case in \Cref{subsec:longtime}.}
        \begin{tabular}{ccc|cc}
            \toprule
            & \multicolumn{2}{c}{$\tau^n = 0.05$} & \multicolumn{2}{c}{$\tau^n = 1$}
            \\
            \midrule
            Method
            & $\max_{n}\norm[1]{p_\ell^n - p_\ell(t^n)}_\Omega$  & $\max_{n}\norm[1]{s_a^n - s_a(t^n)}_\Omega$ & $\max_{n}\norm[1]{p_\ell^n - p_\ell(t^n)}_\Omega$  & $\max_{n}\norm[1]{s_a^n - s_a(t^n)}_\Omega$
            \\
            \midrule
            \textbf{MP} & $5.02\times 10^{-5}$ & $1.16\times 10^{-4}$ & $2.16\times 10^{-4}$ & $4.16\times 10^{-2}$
            \\
            \textbf{BE} & $1.89\times 10^{-4}$ & $1.18\times 10^{-4}$ & $2.31\times 10^{-3}$ & $1.80\times 10^{-3}$
            \\
            \ref{eq:first_lag} & $3.56\times 10^{-3}$ & $3.44\times 10^{-3}$ & $1.67\times 10^{-2}$ & $5.76\times 10^{-2}$
            \\
            \ref{eq:second_lag}& $1.76\times 10^{-4}$ & $1.79\times 10^{-4}$ & $1.06\times 10^{-2}$ & $2.75\times 10^{-2}$
            \\
            \bottomrule
        \end{tabular}
        \label{tab:maxerrors}
    \end{table}
\end{center}

\begin{center}
    \begin{table}[tbp]
        \centering
        \small
        \caption{Maximum values of the relative error in the free energy for test case in \Cref{subsec:longtime}.}
        \begin{tabular}{cc|c}
            \toprule
            & \multicolumn{1}{c}{$\tau^n = 0.05$} & $\tau^n = 1$
            \\
            \midrule
            Method
            & $\displaystyle \max_{n}\abs{\frac{E(s_a(t^n)) - E(s_a^n)}{E(s_a(t^n))}}$  & $\displaystyle \max_{n}\abs{\frac{E(s_a(t^n)) - E(s_a^n)}{E(s_a(t^n))}}$
            \\
            \midrule
            \textbf{MP} & $2.31\times 10^{-4}$ & $8.35\times 10^{-2}$
            \\
            \textbf{BE} & $2.35\times 10^{-2}$ & $5.30\times 10^{-1}$
            \\
            \ref{eq:first_lag} & $2.35\times 10^{-2}$ & $5.31\times 10^{-1}$
            \\
            \ref{eq:second_lag} & $7.54\times 10^{-4}$ & $1.86\times 10^{-1}$
            \\
            \bottomrule
        \end{tabular}
        \label{tab:maxerrors_energ}
    \end{table}
\end{center}
Since we have a manufactured solution, we can calculate the numerical and the real free energy $E$ at every time level according to \cref{eq:free_energy}, with energy parameters $\gamma_a = -\frac{6.3}{\ln(0.01)} \approx 1.368$, $\gamma_\ell=0$ and $\gamma_{a\ell}=0$, which is in accordance to the capillary pressure $p_c = \frac{6.3}{\ln(0.01)}\ln(s_a)$, so that at the continuous level, we have $\nu_a = -p_c$. At the discrete level, this is enforced through \cref{eq:nu_discrete}.
In \Cref{fig:energy_errors}, we show the relative errors in the free energy obtained with all the methods. 
We see that for both time steps, in general, the free energy is approximated more accurately with the second-order methods. Moreover, \textbf{MP} produces the lowest errors. 
Furthermore, in \Cref{tab:maxerrors_energ}, we observe that the maximum errors are the lowest with the midpoint method. For $\tau^n = 1$, the time-lagging schemes produce errors that are one order of magnitude larger than the fully implicit methods. We remark that even though for $\tau^n=1$ the $L^2$ errors in $s_a$ with \textbf{MP} are not the smallest ones, the midpoint method still approximates the free Gibbs energy more accurately than all other methods. This numerically demonstrates the relevance of the energy balance property \cref{eq:energy_discrete} that the midpoint method satisfies.

\begin{figure}
    \begin{subfigure}[t]{.45\textwidth}
        \centering
        \includegraphics[width=\linewidth]{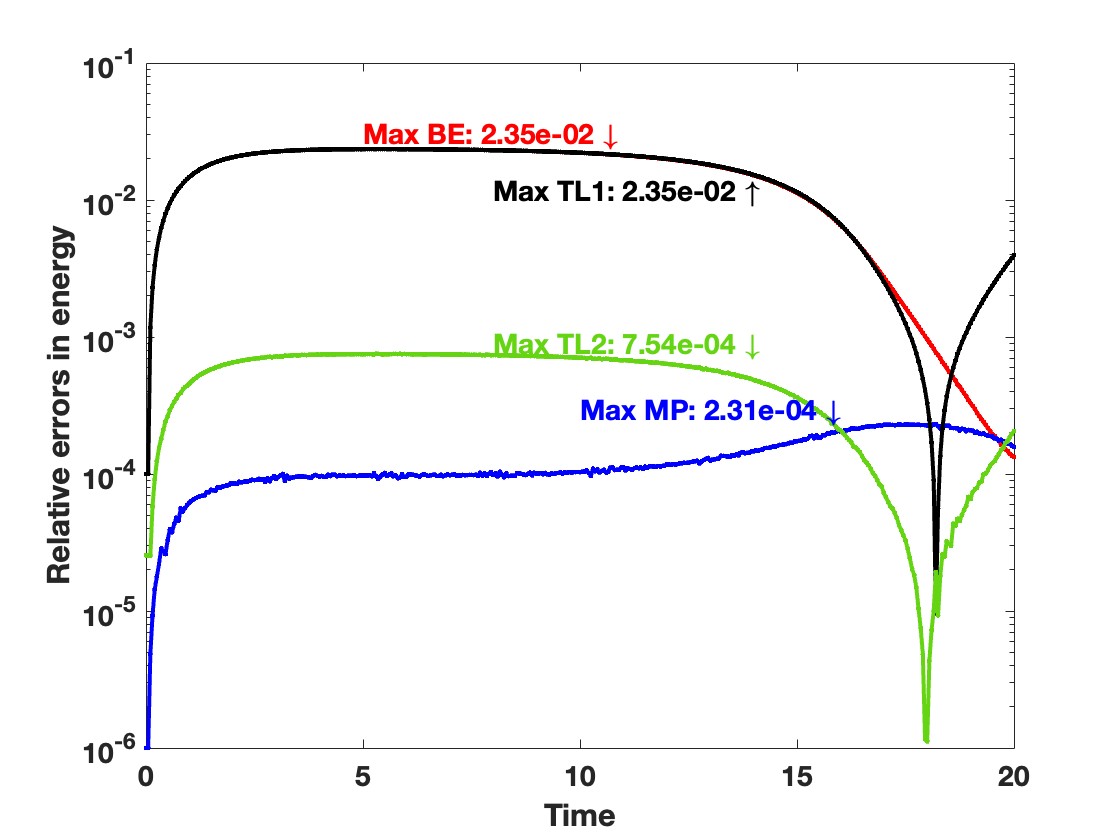}
        \caption{The relative errors in the free energy with $\tau^n = 0.05$.}
        \label{subfig:energy_0p1}
    \end{subfigure}
    \begin{subfigure}[t]{.45\textwidth}
        \centering
        \includegraphics[width=\linewidth]{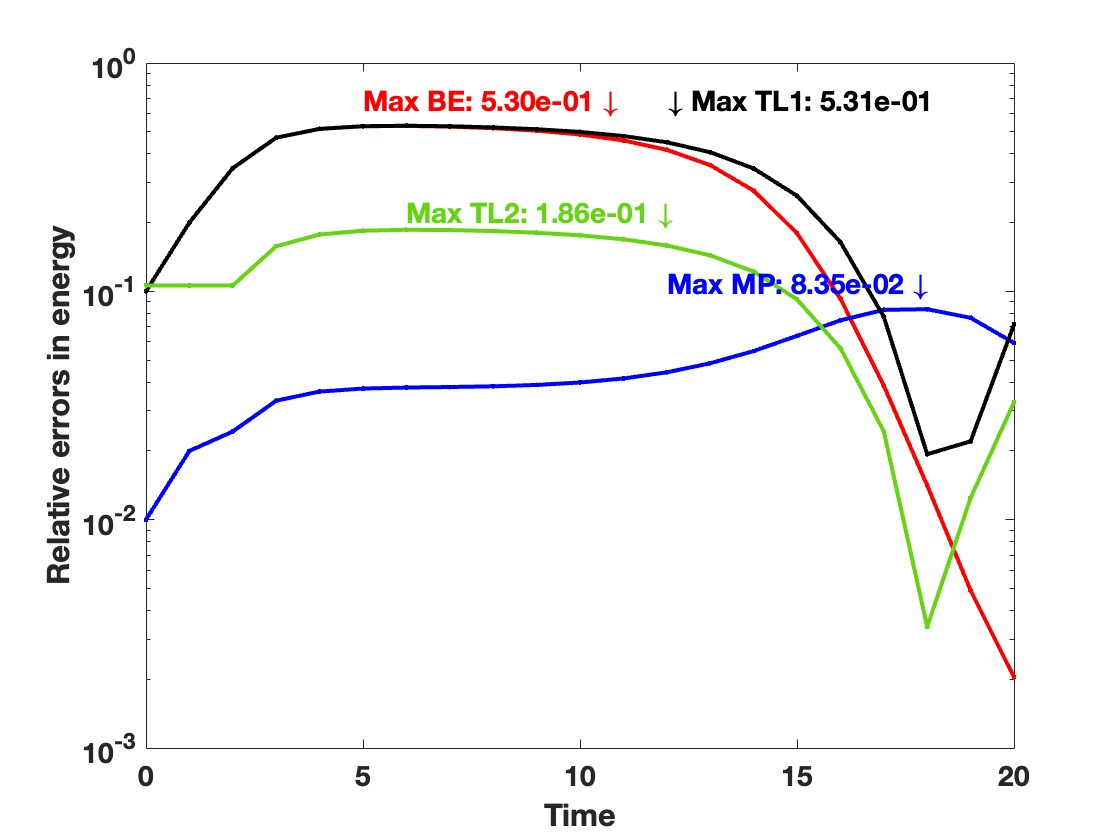}
        \caption{The relative errors in the free energy with $\tau^n = 1$.}
        \label{subfig:energy_2}
    \end{subfigure}
    \caption{The relative error in the free energy $E$ given in \cref{eq:free_energy} for test case in \cref{subsec:longtime}. Blue line: \textbf{MP}, red line: \textbf{BE}, black line: 
    \ref{eq:first_lag}, green line: 
    \ref{eq:second_lag}.}
    \label{fig:energy_errors}
\end{figure}


\subsection{Quarter-five spot problem}
 \label{subsec:q5spot}
	Finally, we consider a quarter-five spot problem, see e.g. \cite{EpshteynRiviere}. 
 The domain is $\Omega = [0,100]^2 \backslash \{[0,5]^2 \cup [95,100]^2\} \text{ m}^2$, i.e., it is the square $[0,100]^2 \text{ m}^2$ where two corners have been cut out; see \Cref{fig:domain_qfvs}.
	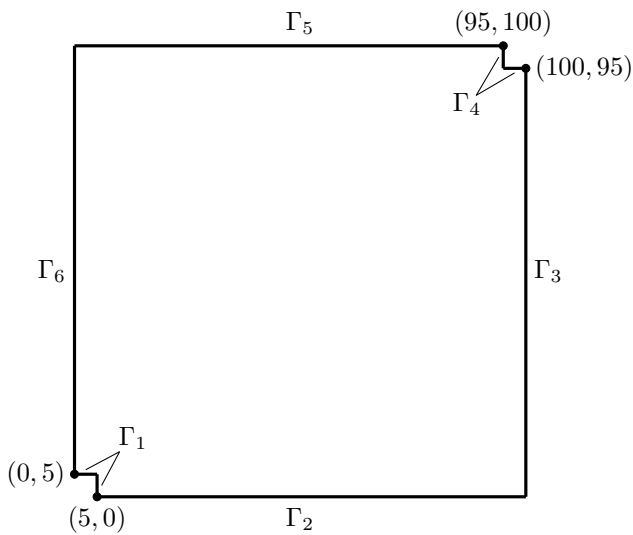
\begin{figure}
		\centering
		\begin{tikzpicture}[scale=0.30]
			\draw[black,very thick] (0,1) -- (1,1); 
			\draw[black,very thick] (1,1) -- (1,0); 
			\draw[black, very thick] (1,0) -- (20,0); 
			\draw[black, very thick] (20,0) -- (20,19); 
			\draw[black, very thick] (20,19) -- (19,19); 
			\draw[black, very thick] (19,19) -- (19,20); 
			\draw[black, very thick] (19,20) -- (0,20); 
			\draw[black, very thick] (0,20) -- (0,1); 
			\filldraw[black] (1,0) circle (5pt) node[anchor=north]{$(5,0)$};
			\filldraw[black] (0,1) circle (5pt) node[anchor=east]{$(0,5)$};
			\filldraw[black] (20,19) circle (5pt) node[anchor=west]{$(100,95)$};
			\filldraw[black] (19,20) circle (5pt) node[anchor=south]{$(95,100)$};
			\node at (2.6, 2.6) {$\Gamma_1$};
			\draw[black, thin] (0.5, 1.2) -- (2.,2.);
			\draw[black, thin] (1.2, 0.5) -- (2.,2.);
			\node at (10, -1) {$\Gamma_2$};
			\node at (21, 10) {$\Gamma_3$};
			\node at (17.4, 17.4) {$\Gamma_4$};
			\draw[black, thin] (19.5, 18.8) -- (17.8, 17.8);
			\draw[black, thin] (18.8, 19.5) -- (17.8, 17.8);
			\node at (10, 21) {$\Gamma_5$};
			\node at (-1, 10) {$\Gamma_6$};
		\end{tikzpicture}
		\caption{Depiction of domain $\Omega$ and different parts of the boundary, for example in \cref{subsec:q5spot}}
		\label{fig:domain_qfvs}
	\end{figure}
\\
	The boundary conditions are
	\begin{align*}
		\text{Inflow on } \Gamma_1\text{:} & \,\,\,p_\ell = 3\times 10^5 \text{ Pa}, \,\, s_a = 0.7,
		\\
		\text{Outflow on } \Gamma_4\text{:} & \,\,\,p_\ell = 10^5 \text{ Pa}, \,\, \lambda_a\kappa \nabla p_{c}\cdot \boldsymbol{n} = 0,
		\\
		\text{No-flow on } \Gamma_2 \cup \Gamma_3 \cup \Gamma_5 \cup \Gamma_6\text{:} & \,\,\,\lambda_\ell \kappa \nabla p_\ell \cdot \boldsymbol{n} = 0,\,\,\lambda_a \kappa \nabla  p_{c} \cdot \boldsymbol{n} = 0,
	\end{align*}
with the 
initial conditions 
$s_a = 0.2$ and $p_\ell = 10^5 \text{ Pa}$. The rest of the flow properties are given by:
	\begin{equation*}
		\phi = 0.2, \quad \mu_\ell = 2\times 10^{-3}\text{ Pa}\cdot\text{s}, \quad \mu_a = 5\times 10^{-4} \text{ Pa}\cdot\text{s} , 
	\end{equation*}
 and the relative permeabilities and capillary pressure follow a Brooks--Corey model \cite{BrooksCorey}:
	\begin{equation*}
		\kappa_\ell = (1-s_a)^2(1-s_a^{5/3}), \quad \kappa_a = s_a^{11/3}, \quad p_{c} = 5\times 10^3 s_a^{-1/3}.
	\end{equation*}
	We consider a heterogeneous medium where $\kappa = 5\times 10^{-8} \text{ m}^2$ 
 everywhere, except for in the square $[25,50]\times[25,50]$ where $\kappa$ is 1,000 times lower.
We remark that since the Brooks--Corey model does not follow the structure in \cref{eq:nu_a_definition_continuous}, this capillary pressure model is not thermodynamically consistent with the Gibbs free energy \cref{eq:free_energy}. For this case, the energy $E$ associated with the system would have to satisfy $-p_c = \nu_a \equiv \partial_{s_a}E - \partial_{s_\ell}E$. Therefore, in this example, we will only compare the quality of the discrete aqueous saturation.

  All the time discretizations use the same spatial discretization, which is based on a discontinuous Galerkin method with piecewise quadratic polynomials. 
The mesh consists of $38 \times 38$ quadrilateral elements. The final time for the simulation is $T=750$ seconds (12.5 minutes). 
We consider different constant time steps. 
(i) For $\tau = 1$, the only method that can successfully complete the simulation is \textbf{MP}, which performs in average 4.5 iterations per time step. 
(ii) For $\tau = 0.5$, both \textbf{MP} and \textbf{BE} complete the simulation with an average number of iterations per time step equal to 2.4 and 2.6, respectively. The two time lagging schemes 
\ref{eq:first_lag} and 
\ref{eq:second_lag} blow up for $\tau = 1$ and $\tau = 0.5$. 
(iii) Finally, for $\tau = 0.25$, all methods complete the simulation. 
The implicit methods \textbf{MP} and \textbf{BE} require an average of 2.02 and 1.96 iterations per time step, respectively. We summarize this in \Cref{tab:results_Q5S}.
Since the spatial discretization is the same for all four methods, the size of the problem is the same regardless of the value of $\tau$ and the time discretization. Therefore, the computational cost of the \textbf{MP} method for $\tau = 1$, should be comparable to the computational cost of 
\ref{eq:first_lag} and 
\ref{eq:second_lag} with $\tau = 0.25$. This is because the cost of performing 4 iterations per time step $\tau = 1$, is comparable to the cost of performing just one subiteration with a time step of $\tau/4 = 0.25$.
\begin{center}
\begin{table}[tbp]
    \centering
    \small
    \caption{Convergence summary for test case in \cref{subsec:q5spot} with different time steps. For \textbf{MP} and \textbf{BE}, we show average number of iterations per time step when the method converges.}
    \begin{tabular}{ccccc}
        \toprule
        $\tau$ & \textbf{MP} & \textbf{BE} & \ref{eq:first_lag} 
        & \ref{eq:second_lag}
        \\
        \midrule
        1 & 4.5 & Blows up &  Blows up &  Blows up
        \\
        0.5 & 2.4 & 2.6 &  Blows up &  Blows up
        \\
        0.25 & 2.02 & 1.96 &  Converges &  Converges
        \\
        \bottomrule
    \end{tabular}
    \label{tab:results_Q5S}
\end{table}
\end{center}
In \Cref{fig:sol_snapshots} we show snapshots of $s_a$ at different times obtained with \textbf{MP} with $\tau = 1$ second. Note that this result is what is expected from similar setups in the literature, see e.g. \cite{HDG_Riviere}. Moreover, in \Cref{fig:sol_diagonal} we show $s_a$ along the diagonal $x=y$ at different times. We
  compare all four methods with the largest $\tau$ for which they complete the simulation, i.e., we show the solution for \textbf{MP} with $\tau = 1$, \textbf{BE} with $\tau = 0.5$, and 
  \ref{eq:first_lag} and 
  \ref{eq:second_lag} with $\tau = 0.25$. We see that all four solutions are very similar. 
\begin{figure}
    \begin{subfigure}[t]{.45\textwidth}
        \centering
        \includegraphics[width=\linewidth]{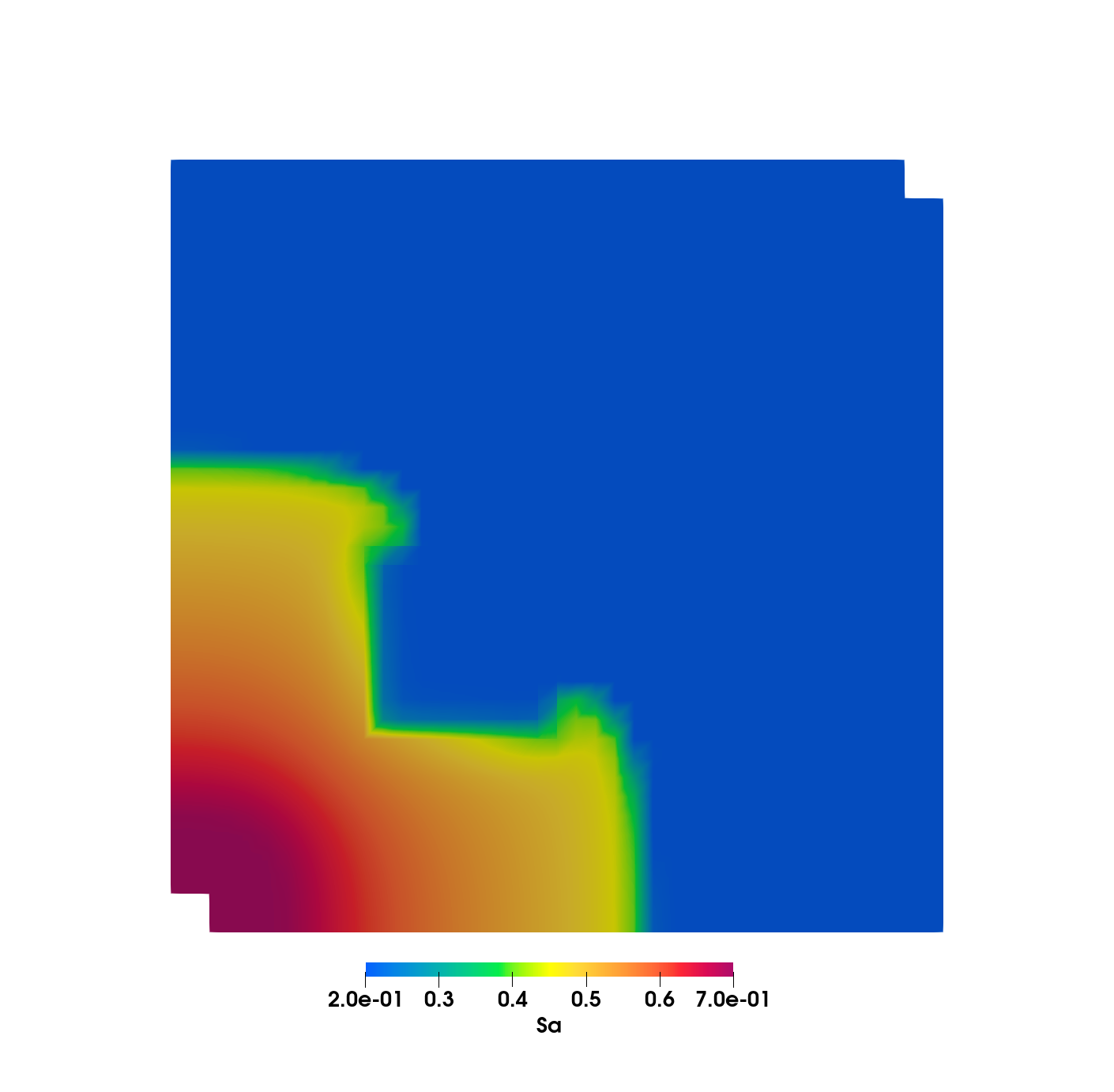}
        \caption{$t=250$ seconds.}
    \end{subfigure}
    \begin{subfigure}[t]{.45\textwidth}
        \centering
        \includegraphics[width=\linewidth]{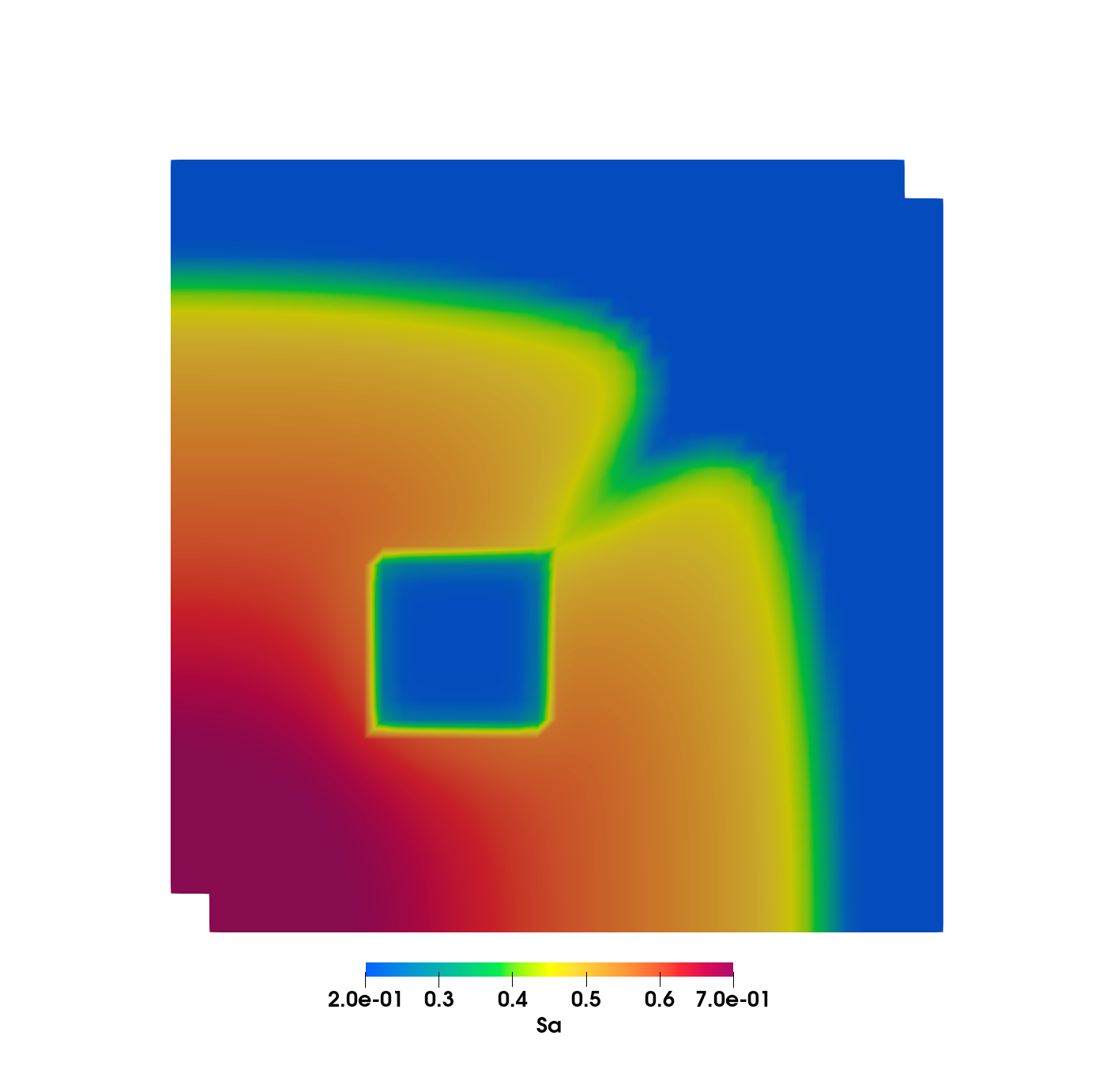}
        \caption{$t=500$ seconds.}
    \end{subfigure}
    %
    %
    \begin{subfigure}[t]{\textwidth}
        \centering
        \includegraphics[width=0.5\linewidth]{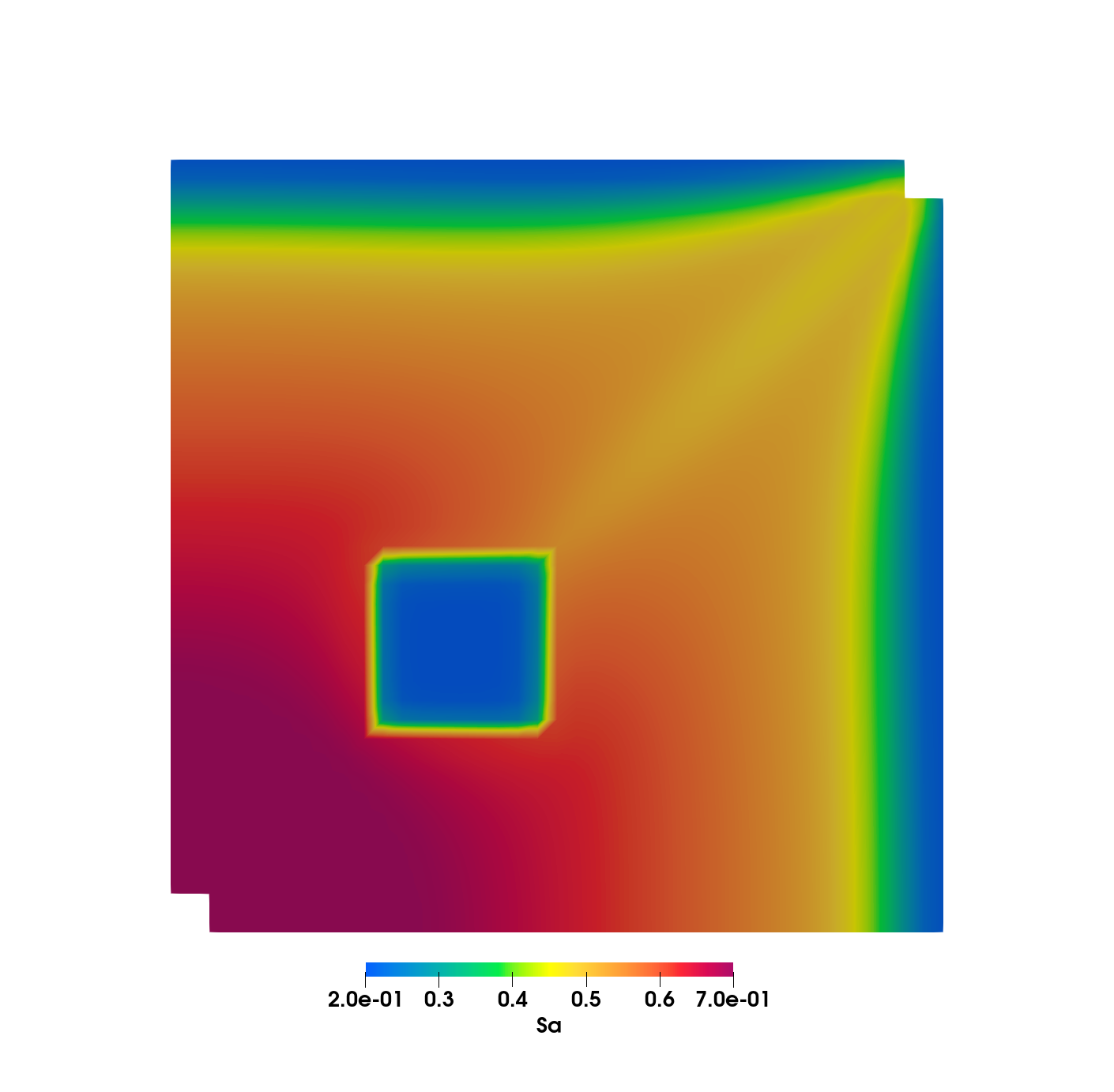}
        \caption{$t=750$ seconds.}
    \end{subfigure}
    
    \caption{Snapshots of the aqueous saturation at different times for test case in \cref{subsec:q5spot} calculated with the midpoint method \textbf{MP} with $\tau=1$ second.}
\label{fig:sol_snapshots}
\end{figure}
 \begin{figure}
    \begin{subfigure}[t]{.45\textwidth}
        \centering
        \includegraphics[width=\linewidth]{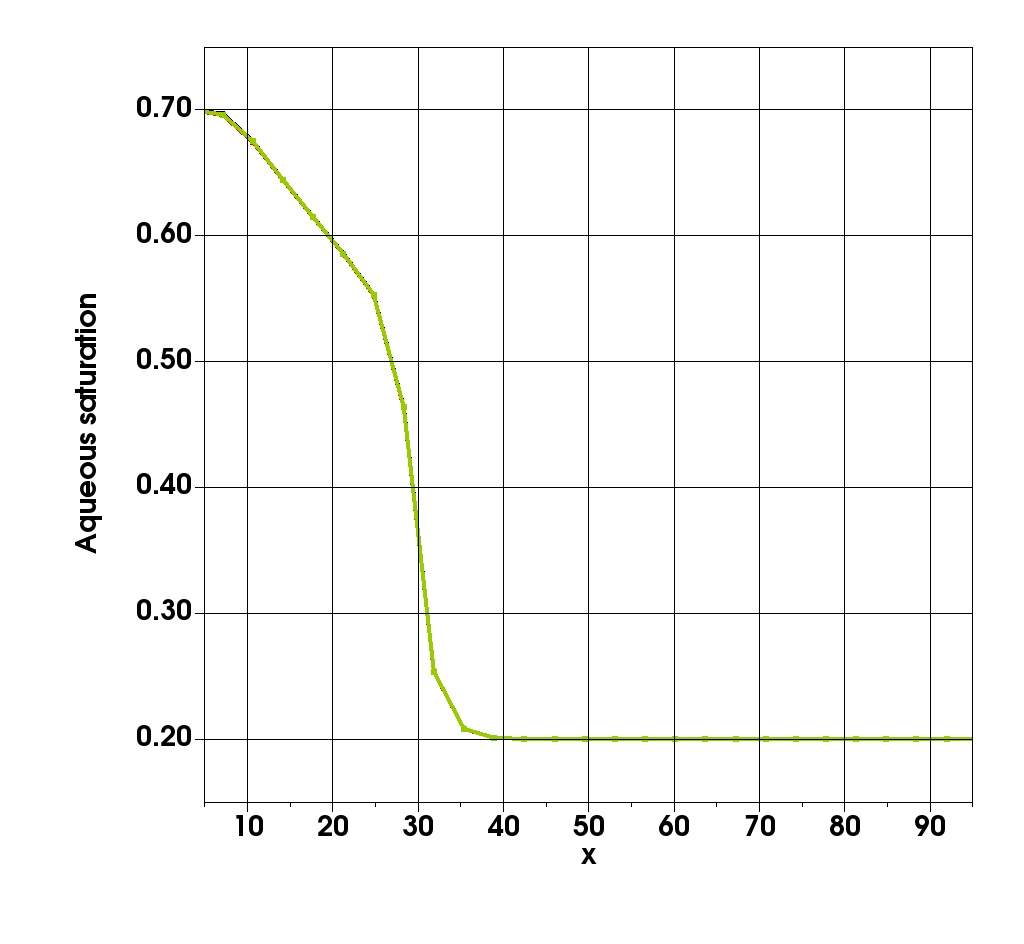}
        \caption{$t=250$ seconds.}
    \end{subfigure}
    \begin{subfigure}[t]{.45\textwidth}
        \centering
        \includegraphics[width=\linewidth]{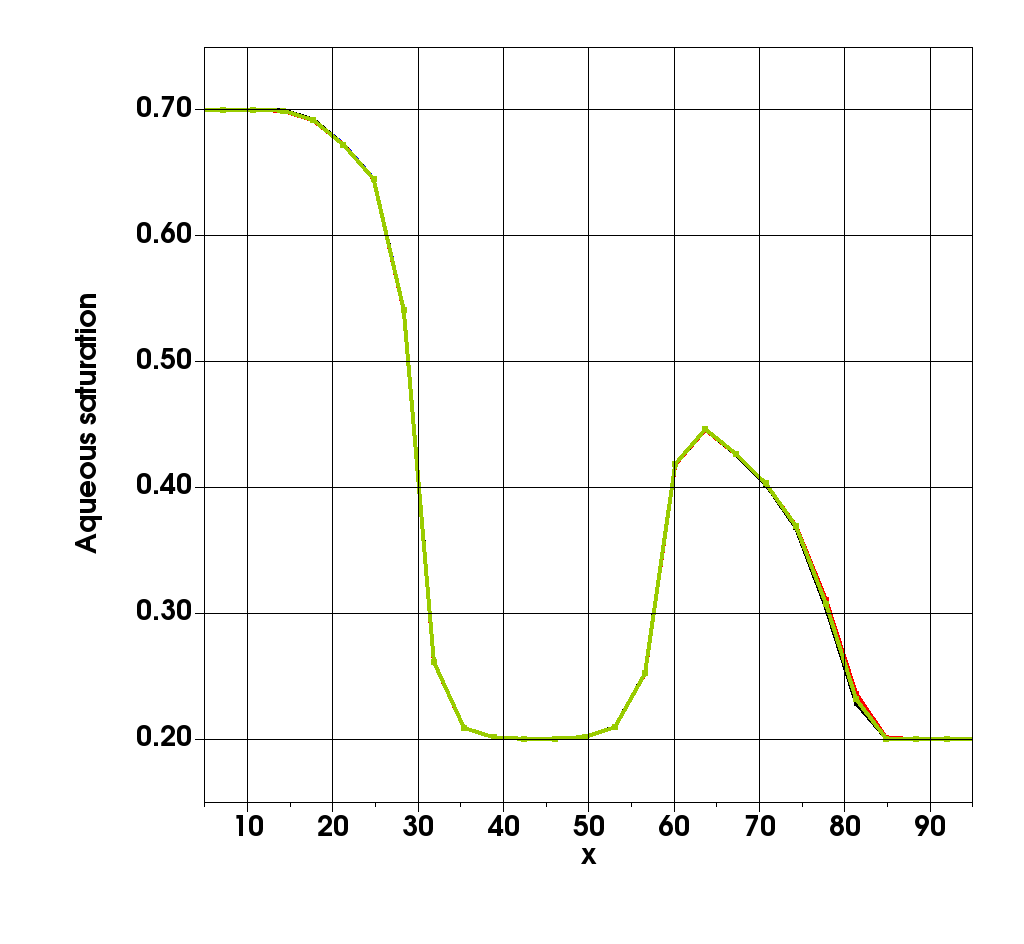}
        \caption{$t=500$ seconds.}
    \end{subfigure}
%
    %
    \begin{subfigure}[t]{.45\textwidth}
        \centering
        \includegraphics[width=\linewidth]{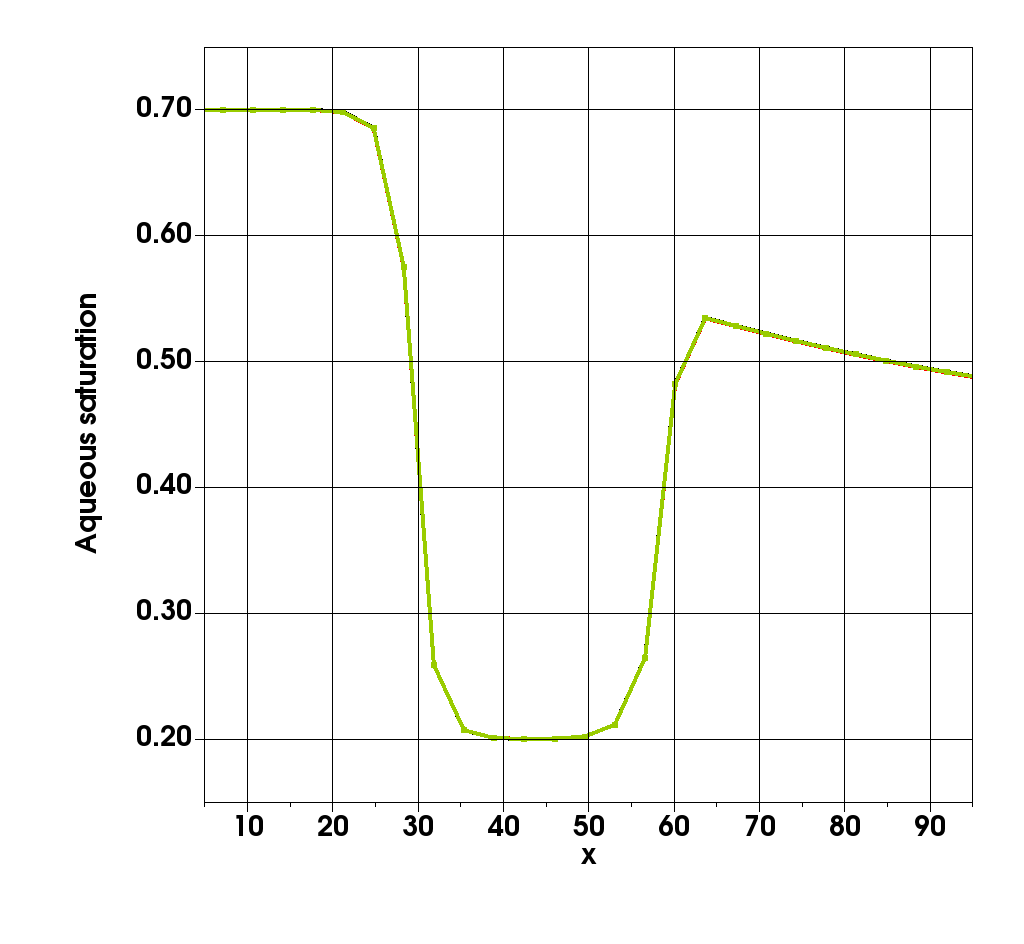}
        \caption{$t=750$ seconds.}
    \end{subfigure}
    \begin{subfigure}[t]{.45\textwidth}
        \centering
        \includegraphics[width=\linewidth]{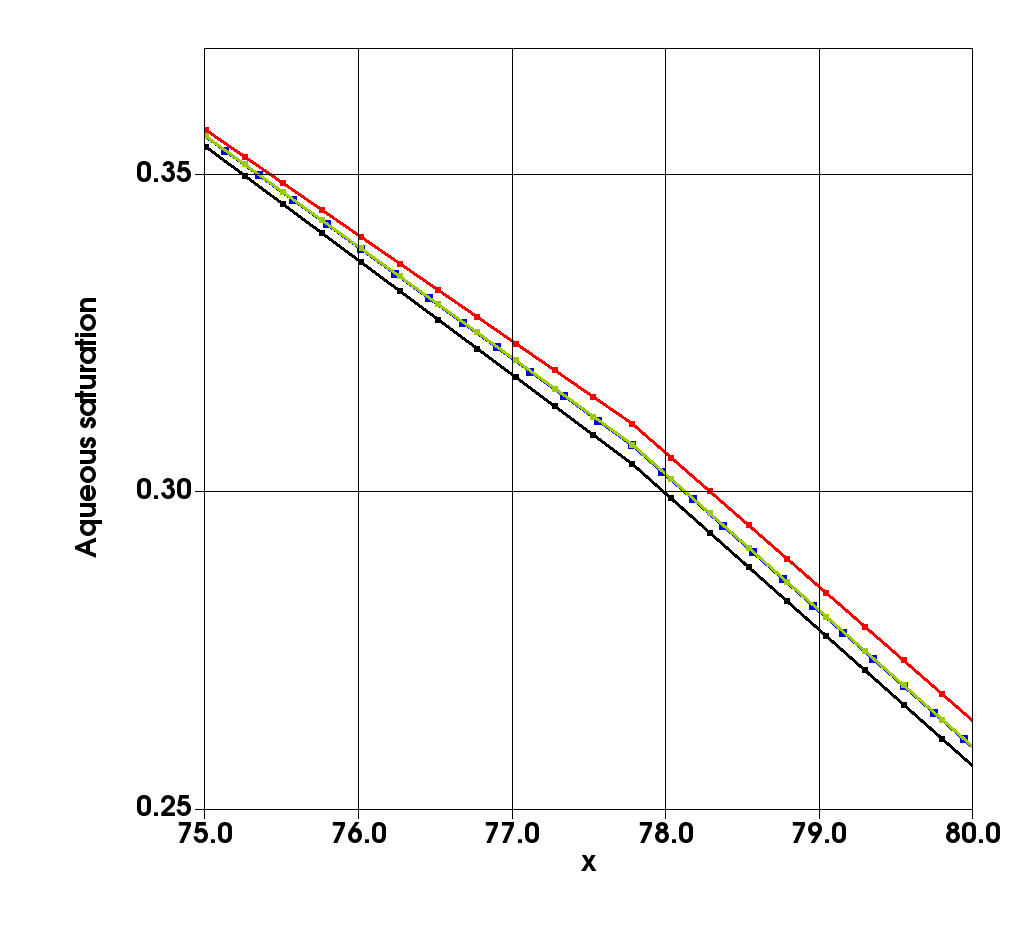}
        \caption{Close-up for $x \in [75 \,\,80]$ at $t = 500$ seconds.}
    \end{subfigure}
    \caption{Aqueous saturation along line $x=y$ for test case in \cref{subsec:q5spot}. Blue line: \textbf{MP} with $\tau=1$, red line: \textbf{BE} with $\tau=0.5$, black line: 
    \ref{eq:first_lag} with $\tau=0.25$, green line: 
    \ref{eq:second_lag} with $\tau=0.25$.}
\label{fig:sol_diagonal}
\end{figure}
 
	\section{Conclusions}
	\label{sec:conclusions}
	 We have presented a second-order time-stepping scheme for the two-phase problem in porous media. The method is proven to satisfy a discrete energy balance equation that is analogous to the energy balance satisfied at the continuous level. 
  We proved the convergence of the subiterations in the implicit step of the time-stepping scheme. 
  We compared this midpoint method against a backward Euler method and two time-lagging schemes. The theoretical rates of convergence are confirmed 
  for of all the methods considered. The midpoint method outperforms the other methods in terms of errors for a large final time, as well as for the approximation of the free energy. 
 This occurs both for the values of time step in the asymptotic regime, as well as for larger time steps. Future work includes extension of this method for the three-phase black oil problem, as well as time-step adaptivity.
\bibliographystyle{siamplain}
	\bibliography{references.bib,database2.bib}
\end{document}